\documentclass[12pt]{article}
 \usepackage{setspace}
 \usepackage[all,cmtip]{xy}
\usepackage{enumerate}
\usepackage{ amssymb, latexsym, amsmath}

\usepackage{lingmacros}
\usepackage{tree-dvips}
\usepackage{amssymb,amsfonts,amsthm,amsmath}

\usepackage{amsmath,amssymb,amsfonts,amscd}
\usepackage[utf8]{inputenc}
\usepackage{amsmath}
\usepackage{amssymb}
\usepackage[hidelinks]{hyperref}
\usepackage[margin=0.9in]{geometry}
\usepackage{parskip}
\usepackage{xcolor}
\newtheorem{theorem}{Theorem}
\newtheorem{proposition}{Proposition}

\newtheorem{corollary}{Corollary}
\newtheorem{defn}{Definition}
\newtheorem{example}{Example}
\newtheorem{remark}{Remark}
\newtheorem*{theorem*}{Theorem}

\usepackage{setspace}
\makeatletter
\def\thm@space@setup{%
  \thm@preskip=8pt
  \thm@postskip=8pt
}
\makeatother
\begin{document}

\title{Existence of Reeb Chords from the Generation-Criteria}
\author{Mohamad Rabah }
\date{}

\maketitle

\begin{abstract}
    We give some existence results of Reeb chords using the $\textit{generation-criteria}$ of the Fukaya category. In particular, for $n\geq3$, we construct a Legendrian $\ell'\subset S^*\mathbb T^n$ such that every closed spin Legendrian $\ell\subset S^*\mathbb T^n$ either admits a self-Reeb chord or a Reeb chord joining it to $\ell'$.
\end{abstract}

\section{Introduction}
We study the existence of Reeb chords from the perspective of Lagrangian Floer theory. Recall that Arnol'd's chord conjecture predicts that every closed Legendrian submanifold of a closed contact manifold admits a non-constant Reeb chord for every contact form defining the given contact structure.
Our results establish variants of this principle in several symplectic settings and also allow chords connecting a given Legendrian to a distinguished Legendrian link arising from a collection of Lagrangians that $\textit{split-generate}$ the Fukaya category. We will prove such principle in various symplectic settings, with the main novelty lying in the closed case. To be more precise, we prove the following results:  

\begin{theorem*}
    Let $(X, \omega)$ be a closed symplectic manifold and let $\mathcal A=\langle(L_1, b_1), \dots, (L_k, b_k)\rangle$ be a finite-collection of weakly unobstructed relatively-spin Lagrangians satisfying the generation-criteria as in \cite{abouzaid2026quantum}. Fix a quantitative generation-criteria as in $\S32$ of \cite{abouzaid2026quantum} and denote its energy-shift by $\rho\geq0$. Let $V\subset X$ be a connected hypersurface of contact-type, intersecting $L_i$ in a Legendrian $\ell_i$. Let $\ell\subset V$ be any closed relatively-spin Legendrian and suppose that there exists a loop $\gamma$ used in Mohnke's Lagrangian construction as in \cite{mohnke2001holomorphic} such that $\delta_\gamma>\rho$, where $\delta_\gamma$ is as in Definition \ref{QuanCriteria}. Then, either $\ell$ admits a non-constant self-Reeb chord or there exists a Reeb chord joining $\ell$ to $\bigcup_{i=1}^k\ell_i$.
\end{theorem*}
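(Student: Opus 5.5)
We argue by contradiction: assume $\ell$ has no non-constant self-Reeb chord and no Reeb chord joining $\ell$ to $\bigcup_{i=1}^k\ell_i$. Following Mohnke \cite{mohnke2001holomorphic}, we attach to $\ell$ and the loop $\gamma$ a closed Lagrangian $L_\gamma\subset X$. Concretely, take a Weinstein-type collar $V\times(-\epsilon,\epsilon)$ of the contact-type hypersurface, in which $\omega$ is the symplectization form. Inside the collar, $L_\gamma$ is obtained by flowing $\ell$ along the Reeb flow, with the profile of the flow parametrized by the loop $\gamma$ in the symplectization coordinates. Because $\ell$ has no self-chords, the trace of $\ell$ under the Reeb flow is embedded, so $L_\gamma\cong \ell\times S^1$ is an embedded closed Lagrangian. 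It is relatively spin because $\ell$ is.

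The first step is to show that $L_\gamma$ is disjoint from every $L_i$. The intersection $L_i\cap(\text{collar})$ is modelled on $\ell_i\times(-\epsilon,\epsilon)$, and $L_\gamma$ lies inside the collar. A point of $L_\gamma\cap L_i$ would be a point of $\ell_i$ lying on a Reeb trajectory through $\ell$ of time bounded by the Reeb-time span of $\gamma$. That trajectory is a Reeb chord from $\ell$ to $\ell_i$, which we have assumed does not exist. Hence $L_\gamma\cap L_i=\emptyset$ for all $i$. It follows that $CF((L_\gamma,b),(L_i,b_i))=0$ at the chain level, for any bounding cochain $b$ on $L_\gamma$. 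We then choose $b$ so that $(L_\gamma,b)$ is weakly unobstructed: we take $b=0$ and check that $\mathfrak m_0$ is a multiple of the unit, or we note that the only discs that contribute are those in the collar with area $\delta_\gamma$ coming from $\gamma$.

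The second step, which is the main point, is to play off the quantitative generation-criteria against $\delta_\gamma$. By \S32 of \cite{abouzaid2026quantum}, the criterion gives the following. For any weakly unobstructed object $(L,b)$, the identity of $CF((L,b),(L,b))$ factors, up to energy shift $\rho$, through twisted complexes built from the $CF((L,b),(L_i,b_i))$. In other words, the unit is $\rho$-close to an element in the image of a map that factors through Floer complexes with the generators. Since all of those complexes vanish, the unit of $HF((L_\gamma,b),(L_\gamma,b))$ is killed at energy level $\rho$. Equivalently, the boundary depth, or torsion threshold, of $CF(L_\gamma,L_\gamma)$ is at most $\rho$, and so is the torsion exponent of the unit.

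On the other hand, Definition \ref{QuanCriteria} should make $\delta_\gamma$ a lower bound for the energy of any nonconstant holomorphic disc with boundary on $L_\gamma$. In Mohnke's model, such a disc must have $\omega$-area at least the area enclosed by $\gamma$, and by monotonicity inside the collar it has energy at least $\delta_\gamma$. Consequently the unit of $L_\gamma$ cannot be killed by differentials of energy less than $\delta_\gamma$, since all differentials on the unit have energy at least $\delta_\gamma$. Combining the two bounds gives $\delta_\gamma\le\rho$, which contradicts the hypothesis $\delta_\gamma>\rho$. This forces either a self-chord of $\ell$ or a chord from $\ell$ to $\bigcup_i\ell_i$.

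The expected obstacle is the last step. It requires making precise that the quantitative criterion yields a torsion bound $\le\rho$ for the unit of an arbitrary weakly unobstructed Lagrangian disjoint from the generators. It also requires that Mohnke's Lagrangian, equipped with a suitable bounding cochain, has every disc energy (hence torsion of its unit) at least $\delta_\gamma$. This second requirement includes handling discs that leave the collar, and I would try to rule those out by stretching the neck along $V$ or by an energy estimate using monotonicity.
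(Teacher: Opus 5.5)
Your architecture matches the paper's. Mohnke's $L_\gamma$ misses every $L_i$, so the two-sided Yoneda bimodule vanishes, and the Cardy relation (Equation (32.4) of \cite{abouzaid2026quantum}) together with unitality of $\hat q_{L_\gamma}$ kills $T^\rho e_{L_\gamma}$; an energy gap keeps it alive. However, the step you flag as the obstacle is a genuine gap, and the tools you propose would not close it.

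\textbf{The energy bound.} Monotonicity gives only a lower bound depending on the local geometry of the collar, which has nothing to do with $\delta_\gamma$. Neck-stretching is unnecessary. Your intermediate claim that a disc has area at least the area enclosed by $\gamma$ is also false in general: a constant disc with a sphere attached has area in $\omega(\pi_2(X))$, which may be smaller than $|C_\gamma|$. The sharp bound is $\delta_\gamma\le|C_\gamma|$.

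The missing idea is a purely topological Stokes identity, which is exactly why $\delta_\gamma$ is defined as it is. For $\beta\in\pi_2(X,L_\gamma)$, let $\kappa(\beta)$ be the winding number of $\partial\beta$ around the $S^1$-factor under $L_\gamma\cong S^1\times\ell$. Let $\bar\beta\in\pi_2(X,V)$ be obtained by attaching the radial collar annulus from $\partial\beta$ to its projection into $\{0\}\times V$. On the inner boundary use $F_\gamma^*(e^s\alpha)=e^{s(\theta)}t'(\theta)\,d\theta$; on the outer boundary $\alpha$ integrates to $\kappa(\beta)\int_{S^1}t'(\theta)\,d\theta=0$. Stokes then gives $\omega(\beta)-\kappa(\beta)C_\gamma=P(\bar\beta)$. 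Hence $\omega(\pi_2(X,L_\gamma))\subseteq C_\gamma\mathbb Z+P(\pi_2(X,V))$. By the very definition of $\delta_\gamma$, every nonconstant holomorphic disc on $L_\gamma$ has energy at least $\delta_\gamma$, including discs that leave the collar, and no analysis is needed.

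\textbf{Unobstructedness.} You want $(L_\gamma,b)$ weakly unobstructed over $\Lambda_0$ so that you can speak of the torsion exponent of its unit. There is no reason $b=0$, or any other $b$, works: discs of area $\ge\delta_\gamma$, not necessarily contained in the collar, may genuinely obstruct $L_\gamma$.

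The paper avoids this by truncating. Choose $\rho<E<\delta_\gamma$ and use the truncated criterion $\hat p^E_{\mathcal A}([\xi_E])=T^\rho e_X$ in $QH^*(X;\Lambda_0/T^E\Lambda_0)$. By the energy bound, every disc contribution vanishes modulo $T^E$, so $m_0\equiv 0$ and $m_1\equiv d_{dR}$. The filtration spectral sequence then collapses, giving $HF(L_\gamma,L_\gamma;\Lambda_0/T^E\Lambda_0)\cong H^*(L_\gamma)\otimes\Lambda_0/T^E\Lambda_0$. In this group $T^\rho e_{L_\gamma}\neq 0$ because $\rho<E$. The Cardy relation modulo $T^E$, applied with the vanishing Yoneda bimodule, forces $T^\rho e_{L_\gamma}=0$, which is the contradiction.

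This is where the strict inequality $\delta_\gamma>\rho$ is actually used: it leaves room for a truncation level $E$ strictly between the two. It is not used in a comparison of torsion thresholds over $\Lambda_0$, which you do not have access to.
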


As a consequence of the above Theorem, we show the following:

\begin{theorem*}[Corollary \ref{MohnkeResult}]
    In the setting as above and under the assumption that $(X, \omega)$ is symplectically aspherical and $\pi_1(V)\hookrightarrow\pi_1(X)$ is an injective group morphism. Every closed relatively-spin Legendrian $\ell$ of $V$ either admits a non-constant self-Reeb chord or there exists a Reeb chord joining $\ell$ to $\bigcup_{i=1}^k\ell_i$.
\end{theorem*}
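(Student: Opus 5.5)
The corollary will follow from the first Theorem once we show that, under asphericity and $\pi_1$-injectivity, there is a Mohnke loop $\gamma$ with $\delta_\gamma>\rho$. My plan is to exhibit loops for which $\delta_\gamma$ is as large as we like, so that it certainly exceeds the fixed energy-shift $\rho$.

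First I recall Mohnke's construction from \cite{mohnke2001holomorphic}. Assume $\ell$ has no non-constant self-Reeb chord and no Reeb chord to $\bigcup_i\ell_i$. Choose a collar $V\times(-\epsilon,\epsilon)$ of the contact-type hypersurface. Choose a loop $\gamma$ in the plane that winds around, scaled by a parameter $s$ measuring the Reeb-flow time we allow. Flowing $\ell$ along the Reeb vector field for time up to $T$ and twisting by $\gamma$ produces an embedded Lagrangian $L_\gamma\subset X$. It is embedded precisely because no short Reeb chord exists. Because there is no chord to any $\ell_i$, the same argument shows $L_\gamma$ can be made disjoint from every $L_i$: near $V$ the $L_i$ are cylinders over $\ell_i$, and intersections would correspond to Reeb chords from $\ell$ to $\ell_i$. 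I expect Definition~\ref{QuanCriteria} to define $\delta_\gamma$ as a displacement/energy-type quantity: the minimal symplectic area of disks with boundary on $L_\gamma$, or the area enclosed by $\gamma$. This quantity scales with $T$.

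The key step is then to use the hypotheses to control holomorphic disks on $L_\gamma$. Since $L_\gamma\cong \ell\times S^1$ sits inside the collar, and $\pi_1(V)\to\pi_1(X)$ is injective, every class in $\pi_2(X,L_\gamma)$ has boundary in the span of the $S^1$-factor. Asphericity then gives a formula for its area: $\omega$ on $\pi_2(X)$ vanishes, so the area equals a multiple of the area enclosed by $\gamma$. Indeed, the boundary loop's projection to $\ell$ must be null-homotopic in $X$, hence in $V$, so the disk is homotopic into the collar, where the area is computed explicitly. Consequently $L_\gamma$ is weakly unobstructed with bounding cochain $0$, and its minimal nonzero area is $\delta_\gamma$, proportional to the enclosed area of $\gamma$. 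Because there are no Reeb chords at all, the construction works for every $T$. Taking $T$, and hence $\delta_\gamma$, larger than $\rho$ gives the required loop, and the first Theorem yields a contradiction.

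The main obstacle is this middle step. We must verify that $\pi_1$-injectivity truly forces every disk class to have area a multiple of $\delta_\gamma$. In particular we must handle boundary loops with nontrivial projection to $\ell$ that become trivial in $X$; this is exactly where injectivity is used, via a homotopy pushing the disk into $V\times(-\epsilon,\epsilon)$. We also need to check that this matches the precise form of $\delta_\gamma$ in Definition~\ref{QuanCriteria}. I would first try to prove a lemma that $\omega(\pi_2(X,L_\gamma))=\mathbb Z\cdot A(\gamma)$. I would then cite that the quantitative criterion only requires $\delta_\gamma>\rho$.
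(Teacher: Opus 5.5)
Your plan is the paper's argument. Under asphericity and $\pi_1$-injectivity one gets $\delta_\gamma=C_\gamma=A_{min}(L_\gamma)$, and since $C_\gamma=\int_\gamma e^s\,dt$ exceeds $\rho$ once $T$ is large (allowed because, with no chords, Mohnke's construction works for every $T$), Proposition~\ref{quantitativeresult} applies.

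Two adjustments are needed. First, the paper defines $\delta_\gamma=\inf\bigl((C_\gamma\mathbb Z+P(\pi_2(X,V)))\cap\mathbb R_{>0}\bigr)$. Your lemma shows $A_{min}(L_\gamma)=C_\gamma$, but to meet this definition you should run the same capping argument on $\pi_2(X,V)$ to get $P\equiv 0$: cap the boundary loop by a disk in $V$, then use $\omega|_V=d\alpha$ and asphericity. Second, area quantization gives unobstructedness of $L_\gamma$ only modulo $T^E$ for $E<C_\gamma$, not over $\Lambda_0$; that is all the argument uses.
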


Unlike Mohnke's Theorem as found in \cite{mohnke2001holomorphic}, our result does not require any Hamiltonian-displaceability assumption and in return, can be applied for instance, to the case when $X=\mathbb T^{2n}\cong\mathbb T^n\times \mathbb T^n$ for $n\geq 3$, together with its standard symplectic form, and $V=S^*\mathbb T^n$ the unit cosphere-bundle of the first factor $n$-torus. Indeed, we show the following:

\begin{theorem*}[Theorem \ref{cosphereResult}]
    For $n\geq3$, let $V=S^*\mathbb T^n$ be the unit cosphere-bundle of the flat torus $\mathbb T^n$ together with its canonical contact form $\alpha$ induced from $T^*\mathbb T^n$. There exists a closed embedded Legendrian link $\ell'$ such that, every closed spin Legendrian $\ell$ of $V$ either admits a non-constant self-Reeb chord for $\alpha$ or a Reeb chord for $\alpha$ joining $\ell$ to $\ell'$.
\end{theorem*}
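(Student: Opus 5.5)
We deduce the cosphere statement from the Corollary (the symplectically aspherical, $\pi_1$-injective version of the main Theorem), applied to $X=\mathbb T^{2n}=\mathbb T^n\times\mathbb T^n$ with its standard flat form $\omega=\sum_i dq_i\wedge dp_i$. Here the first factor carries positions $q$ and the second factor momenta $p$, so $X$ is a quotient of $T^*\mathbb T^n=\mathbb T^n\times\mathbb R^n$ by the lattice $\mathbb Z^n$ acting on $p$.

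\textbf{Step 1: embedding $V$.} We embed the unit codisc bundle $D^*_r\mathbb T^n$ for a small radius $r<1/2$ symplectically into $X$, by $(q,p)\mapsto (q,p \bmod \mathbb Z^n)$. This is injective because $r<1/2$. Its boundary $V_r=S^*_r\mathbb T^n$ is a hypersurface of contact type, with Liouville field $p\,\partial_p$ transverse to it. It is contactomorphic to $(S^*\mathbb T^n,\alpha)$ after rescaling by $r$, and rescaling the contact form preserves the existence of Reeb chords. Two facts are needed:
\begin{itemize}
\item $\pi_1(V)\cong\mathbb Z^n$ for $n\geq 3$, because the fibre $S^{n-1}$ is simply connected.
\item The map $\pi_1(V)\to\pi_1(X)=\mathbb Z^{2n}$ is the inclusion onto the first factor, hence injective.
\end{itemize}
Since $\mathbb T^{2n}$ has $\pi_2=0$, it is symplectically aspherical. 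This is exactly where $n\geq 3$ is used: for $n=2$ the fibre is a circle and $\pi_1(V)\to\pi_1(X)$ fails to be injective.

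\textbf{Step 2: a split-generating collection.} We take a finite collection $\mathcal A$ of linear Lagrangian tori in $\mathbb T^{2n}$, each equipped with local systems or bounding cochains $b_i$. Natural candidates are the fibres $\{q=c\}$ and the products of linear subtori, which are tautologically unobstructed because $\pi_2=0$ and there are no discs. We then invoke the generation-criteria of \cite{abouzaid2026quantum}: the open–closed map from the Hochschild homology of $\mathcal A$ must hit the unit in $QH^*(\mathbb T^{2n})=H^*(\mathbb T^{2n})$. For the torus this is a known computation via homological mirror symmetry for abelian varieties, or directly via Abouzaid's criterion using a finite family of linear tori whose classes span the relevant part of the cohomology.

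We also choose the tori so that each one meets $V$ transversally in a Legendrian $\ell_i$, for instance a torus $\{q=c\}\times T'$ with $T'$ linear, or a conormal-type linear torus. Then each $L_i\cap V$ is the unit conormal of a linear subtorus, which is Legendrian. The link $\ell'=\bigcup_i\ell_i$ is then the sought link; if necessary we first perturb the $L_i$ slightly so that the $\ell_i$ are pairwise disjoint and embedded.

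\textbf{Step 3: conclusion and main obstacle.} Every closed spin Legendrian $\ell\subset V$ is relatively spin in $X$, since $w_2(X)=0$. The Corollary then yields either a non-constant self-Reeb chord of $\ell$ or a Reeb chord from $\ell$ to $\ell'$. Because the contactomorphism in Step 1 is a rescaling of the contact form, these chords transfer to chords for $\alpha$.

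The main obstacle I expect is Step 2: producing an explicit finite collection of Lagrangians in $\mathbb T^{2n}$ that provably satisfies the generation-criteria of \cite{abouzaid2026quantum}, in the same weakly-unobstructed, relatively-spin framework, while also intersecting $V$ cleanly in Legendrians. I would first try the collection of linear tori $\{q=c\}\times\{p\in\Lambda\}$ of mixed type. For these, the Floer cohomology and the open–closed image are computable explicitly from Morse–Bott data, and linearity makes the intersection with $\{|p|=r\}$ manifestly Legendrian.
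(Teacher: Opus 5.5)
\paragraph{Verdict: genuine gap in Step~2.} Your overall route is the paper's: apply Corollary~\ref{MohnkeResult} to $X=\mathbb T^{2n}$, with $V$ the boundary of a small codisc bundle. Your checks in Step~1 are correct, and more explicit than the paper's: asphericity, $\pi_1(V)\cong\mathbb Z^n\hookrightarrow\mathbb Z^{2n}$ precisely because $n\geq 3$, and invariance of chords under rescaling $\alpha$. The gap is Step~2. It is the only non-formal input, and you leave it open: you never show that a specific finite collection satisfies $e_X\in\operatorname{Im}\hat{p}_{\mathcal A}$.

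The mechanism you suggest would not supply it. For a linear torus $L\subset\mathbb T^{2n}$ we have $\pi_2(X,L)=0$, so $L$ bounds no holomorphic discs. The part of the open--closed map coming from $HF(L,L)=H^*(L)$ is then just the pushforward $H^*(L)\to H^{*+n}(X)$. It never reaches $H^0(X)$, so no condition that the classes of the $L_i$ span part of $H^*(X)$ can produce the unit. The unit can only come from Hochschild cycles of length at least two, built from intersection points between different branes and the polygons they bound. Computing such a cycle by hand in $\mathbb T^{2n}$ is exactly what one wants to avoid. Quoting homological mirror symmetry for abelian varieties only relocates the problem, since what is needed is the criterion for $\hat{p}_{\mathcal A}$ in the framework of \cite{abouzaid2026quantum}.

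\paragraph{The missing idea.} Obtain the generation-criterion from cohomological smoothness together with the Calabi--Yau condition, as the paper does.
\begin{enumerate}
\item Take $\mathcal A_1=\langle(l,0),(m,0)\rangle$, the longitude and meridian in $\mathbb T^2$. It is cohomologically smooth by Theorem~6.4 and Remark~6.5 of \cite{abouzaid2010homological}, so $\mathcal A_1^{\otimes n}$ is smooth.
\item The K\"unneth comparison (Theorem~26.3 of \cite{abouzaid2026quantum}) transfers smoothness to the geometric product category $\mathcal A^n$ of the $2^n$ tori $L_1\times\cdots\times L_n$ with $L_i\in\{l,m\}$. These are essentially the ``mixed type'' linear tori you had in mind.
\item Since $c_1(\mathbb T^{2n})=0$, automatic generation (Remark~23.6 and Theorem~1.8 of \cite{abouzaid2026quantum}, cf.\ \cite{ganatra2016automatically}) upgrades smoothness to the generation-criterion.
\end{enumerate}
The resulting energy shift $\rho$ need not be computed. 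In the aspherical, $\pi_1$-injective setting, $\delta_\gamma=C_\gamma$ can be made as large as you like by enlarging $T$. After a small Hamiltonian perturbation, the intersections of these tori with $V$ form $\ell'$.
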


\subsection{Organization and proof strategy}

In Sections $2$ and $3$, we deal with the case when the symplectic manifold is a Liouville manifold or monotone and convex at infinity, respectively. The arguments we present are based on a common contradiction argument using the wrapped Floer cohomology. Assuming that the relevant Legendrian carries no Reeb chords, it follows that, there are no Hamiltonian chords in the cylindrical end. Hence, the wrapped Floer complex remains finite-dimensional. This contradicts generation-criteria forcing either triviality of the wrapped floer cohomology or being of infinite rank.

The key novelty in this note is in Section $4$ where we deal with the case of a general closed symplectic manifold. We apply a truncated version of the illustrated contradiction argument above, using Lagrangian Floer theory of \cite{Fukaya2009} and the generation criterion of \cite{abouzaid2026quantum}.

\subsection*{Acknowledgments}
I would like to thank Professors Kaoru Ono, Egor Shelukhin, and Zhengyi Zhou for their interest in this work; Hang Yuan and Yao Xiao for helpful discussions. I would also like to thank my girlfriend, Tracy Zhang Ke, for her support and encouragement throughout the preparation of this work. Finally, I am deeply grateful to my teachers, Professors Kenji Fukaya and Mark McLean; much of my mathematical knowledge is the outcome of their patience and guidance.

\section{Liouville Manifold Case}
	Suppose that $(M,\omega)$ is a non-degenerate Liouville manifold thought of as the completion of the Liouville domain $(\overline{M}, \lambda)$ with a its wrapped Fukaya category denoted by $\mathcal W(M)$, defined over $\mathbb Z/2$. Let $L_1,\dots, L_k$ be a finite-collection of properly embedded exact cylinderical Lagrangians that split-generates $\mathcal{W}(M)$. Denote by $\ell_i:=L_i\cap \partial\overline{M}$ and set $\ell:=\bigcup_{i=1}^k\ell_i$.
	\begin{proposition}
		The Legendrian link $\ell$ has a Reeb chord for the contact form $\alpha\equiv \lambda|_{\partial\overline{M}}$.
	\end{proposition}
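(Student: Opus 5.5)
We argue by contradiction, following the strategy outlined in the introduction. Suppose the Legendrian link $\ell=\bigcup_i\ell_i$ has no Reeb chord for $\alpha$, whether a self-chord of some $\ell_i$ or a chord from $\ell_i$ to $\ell_j$. We compute the wrapped Floer cohomology $HW^*(L_i,L_j)$ using a Hamiltonian $H$ that is $C^2$-small on $\overline M$ and of the form $h(r)$ with $h'(r)\to\infty$ on the cylindrical end $[1,\infty)\times\partial\overline M$. This is equivalently the direct limit over linear-slope Hamiltonians. On the cylindrical end each $L_i$ equals $[1,\infty)\times\ell_i$. Hamiltonian chords of $X_H$ from $L_i$ to $L_j$ in the region $\{r\}\times\partial\overline M$ correspond bijectively to Reeb chords of $\alpha$ from $\ell_i$ to $\ell_j$ of length $h'(r)$. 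Under our assumption there are none. So all generators of $CW^*(L_i,L_j)$ lie in the compact region $\overline M$, where $H$ is $C^2$-small. After a small perturbation these generators are intersection points of $L_i$ with a small pushoff of $L_j$, and there are finitely many of them. Hence each $HW^*(L_i,L_j)$ is finite-dimensional over $\mathbb Z/2$. The same reasoning shows that the continuation maps from slope $\epsilon$ to any larger slope are isomorphisms, so $HW^*(L_i,L_j)\cong HF^*(L_i,L_j)$. For $i=j$ this is $H^*(L_i;\mathbb Z/2)$, since $L_i$ is exact.

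Next we use split-generation to transport this finiteness to symplectic cohomology. Since $L_1,\dots,L_k$ split-generate $\mathcal W(M)$, Abouzaid's generation criterion tells us that the open–closed map $\mathcal{OC}:HH_{*-n}(\mathcal A)\to SH^*(M)$ hits the unit, where $\mathcal A$ is the full subcategory on the $L_i$. In fact $\mathcal{OC}$ is an isomorphism for nondegenerate $M$. This is the step I expect to be the main obstacle, namely reaching a contradiction from this. There are two routes, and I would try the second first because it is cleaner.

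The first route is to show that $SH^*(M)$ would be finite-dimensional, since $HH_*$ of an $A_\infty$-category with finitely many objects and finite-dimensional morphism spaces is computable. However, $HH_*$ need not be finite-dimensional a priori, so this route is delicate.

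The second route uses the unit. Because $SH^*(M)$ acts on each $HW^*(L_i,L_i)$ via the closed–open map $\mathcal{CO}$, and $\mathcal{CO}$ is unital, we get $\mathcal{CO}(1)=1_{L_i}\neq 0$ whenever $HW^*(L_i,L_i)\neq0$. Now we use the identification $HW^*(L_i,L_i)=H^*(L_i)$ from the absence of chords. The composite $H^*(M)\to SH^*(M)\to HW^*(L_i,L_i)$ is then the restriction map $H^*(M)\to H^*(L_i)$, and it factors through the Viterbo map. Applying this to all $i$ and using $\mathcal{OC}$ hitting $1$, a Cardy-type argument should force $SH^*(M)$ to agree with the image of $H^*(M)$ in a way incompatible with nondegeneracy. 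More concretely, I would argue that the unit of $SH^*(M)$ is represented by a Hochschild cycle whose components all lie in low action, namely the action window of the $C^2$-small part. The open–closed map from the low-action subcomplex lands in the image of $H^*(M)\to SH^*(M)$. Hence $1\in SH^*(M)$ lies in this image, and the Viterbo map $H^*(M)\to SH^*(M)$ is surjective onto a nonzero ring.

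To finish, we must rule out the degenerate cases. If all $L_i$ have vanishing wrapped cohomology, then $\mathcal W(M)\simeq 0$, so $SH^*(M)=0$. But then $1_{L_i}=\mathcal{CO}(1)=0$, which contradicts $HW^*(L_i,L_i)\cong H^*(L_i)\neq0$, since $L_i$ is nonempty. So $SH^*(M)\neq0$. The nondegeneracy hypothesis, understood as $M$ containing no closed exact Lagrangian obstruction, or in the version used in the paper as $SH^*(M)$ being of infinite rank, must then be invoked. An infinite-rank $SH^*(M)$ is the image under $\mathcal{OC}$ of Hochschild homology of finite-dimensional chord-free morphism spaces. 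By the action filtration argument above, these morphism spaces all map into the finite-dimensional image of $H^*(M)$. This is the contradiction, so some Reeb chord of $\ell$ must exist. The delicate point is the action-filtration compatibility of $\mathcal{OC}$ with the Viterbo map, which I would justify by choosing all Floer data with small slope and noting that, with no chords, every generator has action close to zero.
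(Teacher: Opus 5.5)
Your first step (with no chords, all generators lie in $\overline M$, each $HW^*(L_i,L_j)$ is finite-dimensional, and $HW^*(L_i,L_i)\cong H^*(L_i;\mathbb Z/2)\neq 0$) matches the paper. So does your treatment of the case $\mathcal W(M)\simeq 0$. The gap is in the case $\mathcal W(M)\neq 0$, where your contradiction never actually materializes.

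\begin{enumerate}
\item The conclusion that $1\in SH^*(M)$ lies in the image of $H^*(M)$ carries no information, because the Viterbo map is unital.
\item The claim that $\mathcal{OC}$ of Hochschild chains built from chord-free generators lands in the constant-orbit part is not justified. Generators in $L_i\cap L_j$ with $i\neq j$ have actions $f_j(x)-f_i(x)$, which are differences of primitives and not near zero. The action available to the output grows with the length of the Hochschild chain. Your hypothesis says nothing about closed Reeb orbits, which is what the output of $\mathcal{OC}$ is asymptotic to.
\item Most importantly, even granting that $SH^*(M)$ is a quotient of $H^*(M)$, hence finite-dimensional, nothing is contradicted. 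Non-degeneracy means that $\mathcal{OC}$ hits the unit for some finite collection of Lagrangians; it holds, for example, for every Weinstein manifold. It does not mean that $SH^*(M)$ has infinite rank. The implication ``$SH^*(M)\neq 0\Rightarrow$ infinite rank'' is not something you may assume, and the paper does not assume it.
\end{enumerate}

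The missing ingredient is the categorical non-properness theorem of \cite{ganatra2021categorical}: the wrapped Fukaya category of a non-degenerate Liouville manifold is either zero or non-proper. With it, the argument closes at once, and this is the paper's proof.
\begin{enumerate}
\item If there are no chords among the $\ell_i$, every $HW^*(L_i,L_j)$ is finite-dimensional.
\item Since the $L_i$ split-generate, every object of $\mathcal W(M)$ is a summand of a twisted complex on the $L_i$. Hence all morphism spaces are finite-dimensional and $\mathcal W(M)$ is proper.
\item $\mathcal W(M)\neq 0$, because $HW^*(L_i,L_i)\cong H^*(L_i;\mathbb Z/2)\neq 0$.
\end{enumerate}
This contradicts the theorem, and no action-filtration analysis of $\mathcal{OC}$ or comparison with the Viterbo map is needed.
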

	\begin{proof}
    If $(\overline{M}, \lambda)$ has a trivial wrapped Fukaya category $\mathcal W(M)$ then, $HW^*(L_i, L_i)=\{0\}$ for any $i=1,\dots, k$. On the other hand, if $\ell_i$ admits no Reeb chord for $\alpha$ then, $HW^*(L_i, L_i)=H^*(W; \mathbb Z/2)\neq\{0\}$, which is absurd. Now assume that $\mathcal{W}(M)$ is non-trivial and assume for a contradiction that for every $i,j=1,\dots, k$, there exists no Reeb chords from $\ell_i$ to $\ell_j$. Then, $CW^*(L_i, L_j)$ is finitely-generated and hence $HW^*(L_i, L_j)$ has finite rank. As $L_1, \dots, L_k$ split-generate $\mathcal W(M)$, it follows that $\mathcal{W}(M)$ is proper, contradicting the main Theorem of \cite{ganatra2021categorical}. Thus, there exists some $i,j=1,\dots, k$ such that $CW^*(L_i, L_j)$ is not finitely-generated and in particualr, there are infinitely-many Reeb chords from $\ell_i$ to $\ell_j$ as desired.
	\end{proof}
    Now suppose that $\ell'\subset\partial\overline{M}$ is exact fillable. That is, there exists an exact cylinderical Lagrangian $K$ such that $K\cap\partial\overline{M}=\ell'$. Once again, using the main Theorem of \cite{ganatra2021categorical}, we can deduce the following existence result. 
	\begin{corollary}
		$\ell'$ admits a self Reeb chord or there exists a Reeb chord from $\ell'$ to the Legendrian link $\ell$. 
	\end{corollary}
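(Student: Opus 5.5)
We argue by contradiction, as in the proof of the preceding Proposition. Suppose $\ell'$ admits no non-constant self-Reeb chord for $\alpha$, and no Reeb chord joins $\ell'$ to $\ell=\bigcup_{i=1}^k\ell_i$ (in either direction). Since $K$ is cylindrical with $K\cap\partial\overline{M}=\ell'$, the generators of $CW^*(K,K)$ and of $CW^*(K,L_i)$ for a quadratic-at-infinity Hamiltonian are of two kinds: intersection points in the compact part $\overline{M}$, of which there are finitely many after a small perturbation, and Hamiltonian chords in the cylindrical end. The chords in the end correspond to Reeb chords of $\ell'$, respectively from $\ell'$ to $\ell_i$. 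Under our assumption, this second set is empty. Hence $CW^*(K,K)$ and every $CW^*(K,L_i)$ are finitely generated, and so $HW^*(K,K)$ and $HW^*(K,L_i)$ have finite rank.

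First case: $\mathcal W(M)$ is trivial. Then $HW^*(K,K)=0$. But with no self-Reeb chords of $\ell'$, the wrapped complex reduces to the Morse/Floer complex of $K$ in the compact region. We then have $HW^*(K,K)\cong H^*(K;\mathbb Z/2)\neq 0$, exactly as in the Proposition. This is a contradiction.

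Second case: $\mathcal W(M)$ is non-trivial. Since $L_1,\dots,L_k$ split-generate $\mathcal W(M)$, the object $K$ is a direct summand of a twisted complex built from the $L_i$. Therefore the Yoneda module $HW^*(K,-)$ restricted to any object $X$ is a summand of a finite iterated cone of the groups $HW^*(K,L_i)$ (with shifts). In particular, $HW^*(K,X)$ has finite rank for every object $X$ of $\mathcal W(M)$. Applying this with $X=K$ shows nothing new, so we exploit it differently. Taking $X=L_j$ is also already known. The right step is to use split-generation on the \emph{other} slot: every object $Y$ is a summand of a twisted complex in the $L_i$, so $HW^*(K,Y)$ has finite rank for all $Y$. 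Hence the functor $HW^*(K,-)$ is a proper (finite-rank) module over $\mathcal W(M)$. Since $K$ itself is an object, the argument also gives that $HW^*(K,K)$ is finite rank and non-zero.

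The obstacle is converting this into a contradiction with the main Theorem of \cite{ganatra2021categorical}. That theorem says a non-trivial wrapped Fukaya category of a Liouville manifold is never proper; more precisely, I would invoke its stronger form, that there is no non-zero object whose endomorphism algebra has finite rank, or equivalently its mechanism via the Viterbo/Reeb-length filtration. The plan is to use the same filtration argument applied to $K$ alone. By that theorem's argument (the action of $SH^*(M)$ on $HW^*(K,K)$ through the closed–open map, together with split-generation forcing $SH^*(M)\to HH^*(\mathcal W)$ to be an isomorphism), a non-zero wrapped object must have $HW^*(K,K)$ of infinite rank when $\mathcal W(M)\neq 0$. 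This would contradict the finiteness established above. If that sharper form is unavailable, the fallback is to use finiteness of $HW^*(K,L_i)$ and $HW^*(L_i,K)$ (by symmetry of the chord assumption), together with the finiteness of $HW^*(L_i,L_j)$ if additionally chords among the $\ell_i$ are absent. This part is the step I expect to need the most care. In either route, we conclude that $\ell'$ has a self-Reeb chord or there is a Reeb chord between $\ell'$ and $\ell$.
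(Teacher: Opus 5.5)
Up to the last step your argument matches the paper. No self-chords gives $HW^*(K,K)\cong H^*(K;\mathbb Z/2)\neq 0$. This disposes of $\mathcal W(M)=0$, and otherwise shows $K\neq 0$. No chords from $\ell'$ to $\ell$ makes each $CW^*(K,L_i)$ finitely generated. Split-generation in the second slot then gives that $HW^*(K,Y)$ has finite rank for every object $Y$, i.e.\ $K$ is a left-proper object.

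The gap is in how you then reach a contradiction. The ``stronger form'' you propose to invoke is false: it says either that a non-trivial $\mathcal W(M)$ has no non-zero object with finite-rank endomorphisms, or that the $SH^*$-action via $\operatorname{CO}$ forces $HW^*(K,K)$ to have infinite rank. The zero section of $T^*Q$ is a counterexample, being a non-zero object with $HW^*(Q,Q)\cong H^*(Q)$. Nor is this what \cite{ganatra2021categorical} proves for non-compact Lagrangians. A warning sign is that, if it held, your argument would give a self-chord of $\ell'$ outright and the link $\ell$ would play no role.

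Your fallback fails as well. Finiteness of $HW^*(L_i,L_j)$ cannot be assumed: the preceding Proposition shows that when $\mathcal W(M)\neq 0$ there are infinitely many Reeb chords from some $\ell_i$ to some $\ell_j$. Moreover, using $HW^*(L_i,K)$ requires excluding chords from $\ell$ to $\ell'$ too. That would only prove the weaker, direction-free statement, not a chord \emph{from} $\ell'$ to $\ell$.

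The missing point is that no further conversion is needed. The main theorem of \cite{ganatra2021categorical}, as the paper uses it, says exactly that a non-compact exact cylindrical Lagrangian which is non-zero in $\mathcal W(M)$ cannot be left-proper: $HW^*(K,-)$ must have infinite rank on some object. The two facts you already proved, $K\neq 0$ and finite rank of $HW^*(K,Y)$ for all $Y$, contradict this directly, and that is the whole of the paper's second case. Drop the endomorphism-level claim and the fallback, and apply the theorem to the left-properness you established. This also uses only the absence of chords from $\ell'$ to $\ell$, matching the statement.
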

	\begin{proof}
		Suppose that $\ell'$ admits no self Reeb chords then, $CW^*(K,K)$ is finitely-generated and $HW^*(K, K)\cong H^*(K; \mathbb Z/2)\neq\{0\}$. Hence, $K\neq 0\in\mathcal W(M)$. Thus, if $\mathcal W(M)$ is trivial, then $HW^*(K, K)=\{0\}$ which is absurd. Now suppose that $\mathcal W(M)$ is non-trivial and suppose in addition that we do not have any Reeb chords from $\ell'$ to $\ell$ then, generically $CW^*(K, L_i)$ is finitely-generated for every $i=1,\dots, k$. Therefore, $K$ is a left-proper object in $\mathcal W(M)$, contradicting the main Theorem of \cite{ganatra2021categorical}. 
	\end{proof}

\begin{remark}
    In fact, the proof above implies that in the case when $\mathcal W(M)$ is non-trivial, if $\ell'$ admits no self Reeb chords for $\alpha\equiv\lambda|_{\partial\overline{M}}$ then, there exists an $i=1,\dots, k$ and infinitely many Reeb chords connecting $\ell'$ to $\ell_i$
\end{remark}

\section{Monotone and Convex at Infinity Case}
	Suppose that $(E,\omega)$ is a monotone symplectic manifold which is convex at infinity and let $Y$ be a contact-type hypersurface with a
	 contact form $\alpha$.  Let $K\subset E$ be an admissible cylinderical monotone Lagrangian, as in $\S 3.4$ of \cite{ritter2017monotone}, such that, its cylinderical end is given by a cylinder over a closed Legendrian $\ell_K$ in $Y$. That is, $K\cap Y=\ell_K$.
	 
	 \begin{proposition}\label{MonotoneCriteria}
	 	Assume that:
	 	\begin{enumerate}
	 		\item $m_0(1)=\lambda$, that is, $K\in\mathcal{W}_\lambda(M)$ the $\lambda$-summand of $\mathcal W(M)$,
	 		\item the generalized $\lambda$-eigensummand $SH^*_\lambda(E)$ is trivial, and
	 		\item for $\epsilon>0$ sufficiently small and using an $\epsilon$-wrapping Hamiltonian $H_\epsilon$, $HF^*(K,K;H_\epsilon)$ is non-trivial.
	 	\end{enumerate}
	 	Then, $\ell_K$ admits a Reeb chord for $\alpha$.
	 \end{proposition}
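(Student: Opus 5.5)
We argue by contradiction, following the pattern of the Liouville case: we assume $\ell_K$ admits no Reeb chord for $\alpha$ and show that the wrapped Floer cohomology $HW^*(K,K)$ is then both zero and nonzero. The framework is Ritter--Smith's monotone wrapped Fukaya category, as in $\S 3.4$ of \cite{ritter2017monotone}. There $HW^*(K,K)$ is the direct limit of $HF^*(K,K;H_m)$ over Hamiltonians $H_m$ that are linear of slope $m$ at infinity (after a small positive perturbation). Hypothesis (1) ensures that $m_0(K)=\lambda$, so the Floer differential squares to zero on $CF^*(K,K;H_m)$ and all these groups are well defined in the $\lambda$-summand.

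\textbf{Step 1: $HW^*(K,K)=0$.} We use the closed--open map
\[
\mathcal{CO}\colon SH^*(E)\to HW^*(K,K).
\]
This is a unital ring map, and on the $\lambda$-summand it factors through the generalized eigensummand $SH^*_\lambda(E)$. The reason is that $\mathcal{CO}(c_1(TE))=m_0(K)\cdot 1=\lambda\cdot 1$, which is the standard Auroux--Kontsevich--Seidel type argument as in Ritter--Smith. Hypothesis (2) says $SH^*_\lambda(E)=0$, so the unit of $HW^*(K,K)$ is the image of $0$. Hence $HW^*(K,K)=0$.

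\textbf{Step 2: without Reeb chords, $HW^*(K,K)$ is nonzero.} For $H_m$ of slope $m$, the Hamiltonian chords of $K$ split into two kinds. The first kind are those in the compact part, which for a small slope and after perturbation correspond to the $\epsilon$-wrapped chords. The second kind lie in the cylindrical end and correspond bijectively to Reeb chords of $\ell_K$ of length less than $m$. If there are no Reeb chords, then for every $m$ the generators of $CF^*(K,K;H_m)$ are exactly those of $CF^*(K,K;H_\epsilon)$. We then need the continuation maps $HF^*(K,K;H_\epsilon)\to HF^*(K,K;H_m)$ to be isomorphisms. To get this, we deform $H_\epsilon$ to $H_m$ through Hamiltonians whose slopes are never Reeb chord lengths; here every slope qualifies, because the set of lengths is empty. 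Along such a deformation no chord appears or disappears, and the compactness and maximum principle for admissible cylindrical monotone Lagrangians keep Floer strips out of the end. The standard argument then shows the continuation maps are isomorphisms. Consequently $HW^*(K,K)\cong HF^*(K,K;H_\epsilon)$, which is nonzero by hypothesis (3). This contradicts Step 1, so $\ell_K$ must have a Reeb chord.

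\textbf{Main obstacle.} The delicate point is Step 2: proving that the continuation maps are isomorphisms when no new chords are created, in the monotone setting. Two things must be controlled. First, disc and sphere bubbling in the continuation moduli spaces, which monotonicity and the fixed value $m_0=\lambda$ handle in the usual way. Second, an action and energy argument ensuring that continuation strips for a slope-increasing homotopy stay in a region where the Hamiltonians agree up to reparametrization. I would first try the approach of making the Hamiltonians agree on the compact part and differ only by a radial reparametrization at infinity. With no Reeb chords, the end then contributes no generators, and the no-escape lemma of Ritter--Smith identifies the continuation map with the identity at chain level. A secondary check is the compatibility of $\mathcal{CO}$ with the $\lambda$-eigensummand decomposition, which I would cite from \cite{ritter2017monotone}.
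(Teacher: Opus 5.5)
Your proposal is correct and is essentially the paper's argument. Assuming no Reeb chords, the wrapped complex stabilizes, so $HW^*(K,K)\cong HF^*(K,K;H_\epsilon)\neq 0$; meanwhile the unital closed--open map $SH^*(E)\to HW^*(K,K)$ factors through $SH^*_\lambda(E)=0$ (Theorem 9.6 and Corollary 9.7 of \cite{ritter2017monotone}), which forces $HW^*(K,K)=0$. The differences are only cosmetic: you treat general $\lambda$ directly where the paper writes out $\lambda=0$, and you justify the stabilization by continuation maps across a chord-free range of slopes rather than by the paper's cofinal family agreeing with $H_\epsilon$ on a fixed compact set together with the maximum principle.
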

	 \begin{proof}
	 	We will prove the case when $K$ is unobstructed, that is $\lambda\equiv0$, as the general case is matter of notational change for the following argument.\\
	 	
	 	Suppose for a contradiction, that $\ell_K$ admits no Reeb chords for $\alpha$. Choose a cofinal family of admissible wrapping Hamiltonians $\{H_n\}_{n\geq1}$ such that, $H_n\equiv H_\epsilon$ on a fixed non-empty compact subset away from the cylinderical end, for all $n\geq 1$. As $\ell_K$ admits no Reeb chords for $\alpha$, it follows that, $H_n$ admits no Hamiltonian chords in the cylinderical end. Therefore, for any $n\geq1$, all Hamiltonian chords of $H_n$ remain in the fixed compact subset. Note that, we are implicitly assuming that our almost-complex structure $J$ is of contact-type and hence, by an integrand Maximum Principle arguement, it follows that 
	 	\[
	 	HW^*(K,K)=HF^*(K,K; H_\epsilon)\neq\{0\}.\]
	 	Consider the Closed-Open map associated to $K$. Namely,
	 	\[
	 	\operatorname{CO}_K:SH^*(E)\rightarrow HW^*(K,K).
	 	\]
	 	By $\S 9.7$, more preciesely Theorem $9.6$ and Corollary $9.7$ of \cite{ritter2017monotone}, and under the assumption that $m_0(1)=0$, it follows that $\operatorname{CO}_K$ factors through $SH^*_0(E)$, the generalized zero-eigensummand. Under the assumption that $SH^*_0(E)=\{0\}$ and as $\operatorname{CO}_K$ is a unital ring morphism, it follows that $HW^*(K,K)=\{0\}$, contradicting our assumption.
	 \end{proof}
	 \begin{corollary}

	Suppose that $E:=\operatorname{Tot}(\mathcal{O}(-k)\rightarrow\mathbb{CP}^m)$ such that $1\leq k\leq m$ and $Y$ is its pre-quantization boundary. Let $K$ be an admissible monotone unobstructed cylinderical Lagrangian, with a Legendrian end $\ell_K:=Y\cap K$. Assume that, $HF^*(K,K;H_\epsilon)\neq\{0\}$ for an $\epsilon$-small wrapping Hamiltonian $H_\epsilon$, then $\ell_K$ admits a Reeb chord for any contact form defining the natural co-oriented contact structure on $Y$.  
	 \end{corollary}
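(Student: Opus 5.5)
This is a direct application of Proposition \ref{MonotoneCriteria} to $E=\operatorname{Tot}(\mathcal O(-k)\to\mathbb{CP}^m)$. We check its three hypotheses and then pass from the standard contact form to an arbitrary one. The third hypothesis is assumed in the statement. The first holds with $\lambda=0$, since $K$ is unobstructed, so $m_0(1)=0$. Setting $\lambda=0$ in the proposition puts us exactly in the unobstructed case treated in its proof.

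The second hypothesis requires that the generalized $0$-eigensummand $SH^*_0(E)$ vanish. Here we invoke Ritter's computation in \cite{ritter2017monotone} for $E=\operatorname{Tot}(\mathcal O(-k)\to\mathbb{CP}^m)$ with $1\le k\le m$, which makes $E$ monotone. In that range $SH^*(E)$ is the localization of $QH^*(E)$ at $c_1(\mathcal O(-k))$, i.e. the quotient of $QH^*(E)$ by the generalized $0$-eigenspace of quantum multiplication by $c_1(TE)$. In particular $0$ is not an eigenvalue of $c_1(TE)\ast$ acting on $SH^*(E)$, so $SH^*_0(E)=0$. I expect this identification to be the main point requiring care. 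One must match the eigensummand decomposition of $SH^*(E)$ used in Theorem~9.6 of \cite{ritter2017monotone} with the one coming from the localization description. In particular one must check that the eigenvalue $m_0(1)=0$ of $K$ indexes a summand that is killed. I would use the explicit presentation $QH^*(E)=\Lambda[x]/(x^{m+1}-(-k)^k t\,x^{k})$, or the analogous formula in Ritter's conventions, and the fact that $SH^*(E)=\Lambda[x]/(x^{m+1-k}-(-k)^k t)$. In this ring $x$ is invertible, and $c_1(TE)=(m+1-k)x$ has only non-zero eigenvalues.

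Proposition \ref{MonotoneCriteria} then yields a Reeb chord of $\ell_K$ for the standard contact form $\alpha$ on the prequantization boundary $Y$. It remains to treat an arbitrary form $f\alpha$ defining the same co-oriented contact structure, with $f>0$. For this I would replace $Y$ by the graph hypersurface $Y_f=\{(f\cdot y)\}$ in the symplectization collar of $E$. This is again contact-type, with induced form $f\alpha$, and $K$ stays cylindrical with respect to it because $K$ is conical along the Liouville flow. The proof of Proposition \ref{MonotoneCriteria} only uses the cylindrical structure near the chosen hypersurface, so it applies verbatim. Neither $SH^*(E)$ nor $HF^*(K,K;H_\epsilon)$ changes under this choice of collar, so the chord exists for $f\alpha$ as well.
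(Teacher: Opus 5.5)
Your proposal is correct and follows the paper's proof essentially step for step. You get $SH^*_0(E)=0$ from Ritter's presentation $SH^*(E)\cong\Lambda[x]/\langle x^{m+1-k}-(-k)^kT\rangle$, where $x$, and hence $c_1(TE)=(m+1-k)x$, is invertible; you apply Proposition \ref{MonotoneCriteria} for the Boothby--Wang form; and you pass to $f\alpha$ through the graph hypersurface $\{r=f(y)\}$ in the cylindrical end, which $K$ meets in $\ell_K$. Your extra remarks on the localization description and on the invariance of $HF^*(K,K;H_\epsilon)$ under the change of collar make explicit what the paper leaves implicit.
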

	 \begin{proof}
	 	By $\S 12.5$ Theorem $12.14$ and Corollary $12.15$ of \cite{ritter2017monotone}, we have
	 	\[
	 	SH^*(E)\cong \Lambda[x]/\langle x^{m+1-k}-(-k)^kT\rangle.
	 	\]
	 	Moreover, $c_1(TE)\star .$ corresponds to multiplication by $(m+1-k)x$ under the above identification. Note that, as $T$ is invertible in the Novikov field $\Lambda$, it follows that $x$ is invertible and so is $c_1(TE)$. Therefore, $SH_0^*(E)=\{0\}$. By Proposition \ref{MonotoneCriteria}, it follows that $\ell_K$ admits a Reeb chord for $\alpha_0$, the $\textit{Boothby-Wang}$ contact form on $Y$. Now let $\alpha:=f\alpha_0$ such that $f:Y\rightarrow(0, \infty)$ is a smooth function. In the cylinderical end with the Liouville form $r\alpha_0$, the hypersurface $\{r\equiv f(y)\}$ is of contact-type, having $\alpha$ as the induced contact form. As $K$ is cylinderical, we have that $K\cap\{r\equiv f(y)\}=\ell_K$. Thus, using Proposition \ref{MonotoneCriteria}, it follows that $\ell_K$ admits a Reeb chord for $\alpha$.
	 \end{proof}

\begin{remark}
             Of course, the above Corollary now follows from the stronger result, Theorem $1$ of \cite{shelukhin2026orderability}.
         \end{remark}

     \section{Closed Case}
   \subsection{Fukaya Category}  
   We start this section by recollecting the necessary notions needed from \cite{abouzaid2026quantum}.

Following \cite{abouzaid2026quantum} namely Definition $7.14$ and Definition $11.5$, we recall the definition of the undeformed Fukaya category. \\
	
	Let $(X, \omega)$ be a closed symplectic manifold and fix a base field $\mathbb F=\mathbb R$ or $\mathbb C$. Denote by $\Lambda_0$ the universal Novikov ring over $\mathbb F$ and fix a background cohomology class $st\in H^2(X; \mathbb Z/2)$.
    
    Let, $L_1, \dots, L_k$
	be a finite collection of closed, connected, immersed with transverse self-intersection, Lagrangian submanifolds of $X$ such that:
	\begin{enumerate}
		\item for all $i=1,\dots, k$, $L_i$ is $st$-relatively-spin, as in Definition $9.23$ of \cite{abouzaid2026quantum}, and
		\item for $i\neq j=1, \dots, k$, $L_i$ and $L_j$ intersect transversally. 
	\end{enumerate}
    \begin{defn}[Weak Bounding Cochain]
        For $i=1,\dots, k$ and after writing $L\equiv L_i$, an undeformed weak bounding cochain is
	\[
	b=b_0+b_+\in\Omega(L; \mathbb F)\hat{\otimes}_\mathbb F\Lambda_0
	\]
	such that:
	\begin{enumerate}
		\item The valuation-zero part, $b_0\in\Omega^1(L; \mathbb F)$ is a closed $1$-form, representing the holonomy of a flat $\mathbb F$-line bundle over $L$,
		\item the positive-valuation part $b_+$, is an element of 
		\[
		\Omega^1(L;\mathbb F)\hat{\otimes}_\mathbb F\Lambda_+\oplus\bigoplus_{r\geq 1}\Omega^{2r+1}(L;\mathbb{F})\hat{\otimes}_\mathbb F\Lambda_0
		\]
		\item Using Definition $7.10$, Equations $(7.14)$ and $(7.15)$ of \cite{abouzaid2026quantum}, we require 
		\[
		\sum_{k\geq0}m_k^{b_0}(b_+,\dots, b_+)=\lambda e_L,
		\] 
		where $\lambda\in\Lambda_+$ and $e_L\in\Omega^0(L; \mathbb F)$ the unit constant function on $L$. 
	\end{enumerate}
    \end{defn}
	We call the tuple $\mathbb L=(L, b=b_0+b_+)$ a Lagrangian brane, keeping the choice of $st$-relative spin structure implicit.

As for the $A_\infty$-structure and following \cite{abouzaid2026quantum}, we will be using the $\textit{deRham model}$ (c.f. \cite{fukaya2010lagrangian}, \cite{fukaya2011lagrangian}). Let $\mathcal L:=\{\mathbb L_1, \dots, \mathbb L_k\}$ be a finite collection of Lagrangian branes and for $i=1,\dots, k$, let $\iota_i:L_i\looparrowright X$ be a Lagrangian immersion representing $L_i$. As per \cite{abouzaid2026quantum} Equations $(9.20), (9.21), (9.25)$, we set 
\[
	CF(L_i,L_{j};\mathbb F)
	=
	\begin{cases}
		\displaystyle
		\bigoplus_{x\in \widetilde L_i\times_X\widetilde L_{j}}
		\mathbb F[x],
		&
		L_i\neq L_j,\\[12pt]
		\displaystyle
		\Omega^*(\widetilde L_i;\mathbb F)
		\oplus
		\bigoplus_{x\in
			(\widetilde L_i\times_X\widetilde L_j)
			\setminus\Delta_{\widetilde L_i}}
		\mathbb F[x],
		&
		L_i=L_j.
	\end{cases}
	\]
    Following Definition $11.5$ of \cite{abouzaid2026quantum}, the morphism space in the Fukaya category $\mathcal A$ associated to $\mathcal L$, is given by
    \[
    \hom^{\mathcal A}(\mathbb L_i, \mathbb L_j)\equiv CF(L_i, L_j;\mathbb F)\hat{\otimes}_\mathbb F\Lambda_0,
    \]
for $\mathbb L_i\neq\mathbb L_j$. Moreover, we set 
\[
\hom^{\mathcal A}(\mathbb L_i, \mathbb L_i):=(CF(L_i, L_i;\mathbb F)\hat{\otimes}_\mathbb F\Lambda_0)\oplus\Lambda_0 e\oplus\Lambda_0 f,
\]
where $e$ denotes the homotopy unit and $f$ denotes a degree $-1$ homotopy between $e$ and $e_{L_i}$ representing the constant unit function on $L_i$ (c.f. Definition $9.5$ and Theorem $9.9$ of \cite{abouzaid2026quantum}).

The $A_\infty$-operations are given as follows:\\
For a given $x_i\in\hom^{\mathcal A}(\mathbb L_{s_{i-1}}, \mathbb L_{s_i})$, we set
\[
m_k^{b}(x_1, \dots, x_k):=\sum_{r_0,\dots,r_k\geq0}(-1)^*m^{b_0}_{k+r_0+\dots+r_k}(b_{+, s_0}^{\otimes r_0},x_1, b_{+, s_1}^{\otimes r_1}x_2,\dots,x_k, b_{+, s_{k}}^{\otimes r_k}),
\]
where $*$ is as in $\S 21$ Equation $(21.3)$ and $b_0=(b_{0, s_0}, \dots, b_{0, s_k})$; with the understanding that if $L_i=L_j$ and $b_{0,i}\neq b_{0,j}$, we set
\[
m_{1, 0}(\eta)\equiv d_{dR}(\eta)+(b_{0, j}-b_{0, i})\wedge\eta,
\]
for any $\eta\in\Omega^*(L_i;\mathbb F)$. The above series converges in the $T$-adic topology giving $\mathcal A$ the structure of a filtered, homotopically unital, cyclic $A_\infty$-category.

\begin{remark}
All the following statements and proofs can be extended to the case of $\textit{bulk-deformed}$ Fukaya categories as in \cite{abouzaid2026quantum}. As it is a matter of notational change and to avoid notational clutter, we only deal with the undeformed case. Nevertheless, incorporating bulk-deformation would give us a stronger bound in Proposition \ref{quantitativeresult} and stronger results in general, as there exists Lagrangians which are unobstructed only after bulk-deformation (c.f. \cite{xiao2024moment}).
\end{remark}

\subsection{Mohnke's Construction}
In order to state and prove our desired result, we remind the reader of $\textit{Mohnke's}$ construction as found in \cite{mohnke2001holomorphic}. Let $(V, \alpha)$ be a co-oriented contact manifold and $\ell\subset V$ a closed relatively-spin Legendrian. Let $S<0$ be a real number and consider the symplectization of $(V, \alpha)$ given by $(S, 0]_s\times V$ with the symplectic form $\omega:=d(e^s\alpha)$. Now suppose that, $\ell$ admits no Reeb chords for $\alpha$. For any $T>0$ real number and a smooth embedded loop $\gamma:S^1_\theta\rightarrow (S, 0)\times (0, T)$. We define $F_\gamma:\ell\times S^1_\theta\rightarrow (S, 0]\times V$ by
	 \[
	 F_\gamma(x, \theta):=(s(\theta), \phi^{t(\theta)}_{R_\alpha}(x)),
	 \]
	 where $\gamma(\theta)=(s(\theta), t(\theta))$ and $\phi^t_{R_\alpha}$ denotes the time-$t$ flow of the Reeb vectorfield $R_\alpha$ associated to $\alpha$. Set $L_\gamma:=F_\gamma(\ell)\subset (S, 0]\times V$ and notice that, $F^*_\gamma(e^s\alpha)=e^{s(\theta)}t'(\theta)d\theta$ and hence, $L_\gamma$ is a Lagrangian of $((S, 0], d(e^s\alpha))$. On the other hand, under the assumption that $\ell$ admits no Reeb chords for $\alpha$, it follows that $F_\gamma$ is an embedding and hence $L_\gamma$ is an embedded relatively-spin Lagrangian.

\begin{remark}
    Strictly speaking, we do not need to assume a relative-spin structure on the Legendrian to apply Mohnke's construction. We assume so, as it is a technical assumption that we need in what follows. For this reason, we assume without stating, that all Legendrians are $st$-relatively-spin.
\end{remark}

\subsection{Existence Results}
Now let $(X,\omega)$ be a general closed symplectic manifold. Suppose that, for $i=1,\dots, k$, $(L_i, b_i)$ is a pair of immersed, relatively-spin, weakly unobstructed Lagrangians, together with a boundaing cochain $b_i\in CF^{\text{odd}}(L_i; \Lambda)$ with the understanding that $b_i\equiv b_{0,i}+b_{+, i}$; such that, $\mathcal A:=\langle(L_1, b_1),\dots, (L_k, b_k)\rangle$ satisfies the split-generation criteria as in \cite{abouzaid2026quantum}. Namely, and following the notation and terminology as in \cite{abouzaid2026quantum}, we assume that the open-closed map defined on the full $A_\infty$-subcategory $\mathcal A$ satisfies
	 \[
	 e_X\in\operatorname{Im}(\hat{p}_\mathcal{A}:HH_*(\mathcal A, \Lambda)\rightarrow QH^*(X;\Lambda)).
	 \] 
	 That is, there exists $[\xi]\in HH_*(\mathcal{A}; \Lambda)$ such that, $\hat{p}_{\mathcal A}([\xi])=e_X\in QH^*(X; \Lambda)$. Moreover, following the constructions as in $\S 24, 32$ of \cite{abouzaid2026quantum} and using Proposition $24.1$ and Theorem $32.1$ of \cite{abouzaid2026quantum}, it follows that, there exists:
	 \begin{enumerate}
	 	\item an integer $D\geq0$, thought of as a $\textit{word-length}$ bound and
	 	\item a real number $\rho\geq0$, thought of as an $\textit{energy-shift}$ factor
	 \end{enumerate}
	 such that, for some $E>0$, in particular $E>\rho$, the truncated open-closed map 
	 \[
	 \hat{p}^E_\mathcal A:HH_*^{\leq D}(\mathcal A; \Lambda_0/T^E\Lambda_0)\rightarrow QH^*(X; \Lambda_0/T^E\Lambda_0),
	 \]
	 associated to $\mathcal{A}$, satisfies 
	 \[
	 \hat{p}^E_\mathcal A([\xi_E])=T^\rho e_X\in QH^*(X, \Lambda_0/T^E\Lambda_0),
	 \]
	 for some $[\xi_E]\in HH_*^{\leq D}(\mathcal A; \Lambda_0/T^E\Lambda_0)$. We will refer to such conditions as the \\
     $\textbf{truncated generation-criteria}$.\\
     
	 Now let $(V, \alpha)$ be a connected hypersurface of $X$ of contact-type such that for $i=1,\dots, k$, $V$ intersects each of $L_i$ transversally and in a Legendrian $\ell_i$.
	 \begin{proposition} \label{MainResClosedCase}
	 	Let $\ell$ be a closed Legendrian submanifold of $(V, \alpha)$ such that, there exists an $E>\rho$, where $\rho$ is the energy-shift factor, as in the $\textbf{truncated generation-criteria}$, there exists a $T>0$ and a smooth embedded loop $\gamma:S^1_\theta\rightarrow (S, 0)\times(0, T)$ such that
	 	\begin{enumerate}
	 		\item $L_\gamma$ is weakly unobstructed modulo $T^E$. That is, there exists $b_\gamma\in CF^{\text{odd}}(L_\gamma; \Lambda_+/T^E\Lambda_0)$ such that,
	 		\[
	 		\sum_{k\geq 0}m_k(b_\gamma, \dots, b_\gamma)\equiv \lambda e_{L_\gamma} \text{     mod }T^E\Lambda_0.
	 		\]
	 		\item $T^\rho e_{L_\gamma}\neq 0\in HF^*((L_\gamma, b_\gamma), (L_\gamma, b_\gamma); \Lambda_0/T^E\Lambda_0)$.
	 	\end{enumerate}
	 	Then, either $\ell$ admits a self-Reeb chord or a Reeb chord to the Legendrian link $\bigcup_i\ell_i$, for the contact form $\alpha$.
	 \end{proposition}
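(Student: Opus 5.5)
Argue by contradiction: suppose $\ell$ has no self-Reeb chord and no Reeb chord to $\bigcup_i\ell_i$ for $\alpha$. The first step is geometric. The absence of self-chords makes Mohnke's map $F_\gamma$ an embedding, so $L_\gamma$ is an embedded Lagrangian. We place the collar $(S,0]\times V$ as a neighbourhood of $V$ in $X$ via the contact-type structure; after possibly shrinking $|S|$, we may assume each $L_i$ meets the collar in the cylinder $(S,0]\times\ell_i$. A point of $L_\gamma\cap L_i$ has the form $(s(\theta),\phi^{t(\theta)}_{R_\alpha}(x))$ with $x\in\ell$ and $\phi^{t(\theta)}_{R_\alpha}(x)\in\ell_i$. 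Since $t(\theta)\in(0,T)$ is positive, such a point is a Reeb chord from $\ell$ to $\ell_i$. Hence $L_\gamma\cap L_i=\emptyset$ for every $i$, and therefore $CF(L_\gamma,L_i)=0=CF(L_i,L_\gamma)$ on the nose. No perturbation is needed, so this holds at every energy level.

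Next we adjoin $(L_\gamma,b_\gamma)$ to $\mathcal A$, working modulo $T^E$. By hypothesis (1), this gives a filtered $A_\infty$-category $\mathcal A'$ over $\Lambda_0/T^E\Lambda_0$, using the curved-but-weakly-unobstructed conventions: the $\lambda e$ terms cancel in $m_1^2$ only when the potentials agree. Since the morphism spaces between $L_\gamma$ and the objects of $\mathcal A$ are zero, the potential mismatch is irrelevant and $\mathcal A'$ is the disjoint union of $\mathcal A$ and the one-object category. The key tool is the truncated Cardy/Abouzaid-type identity from \S 32 of \cite{abouzaid2026quantum} (Theorem 32.1 and its proof). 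It says that for any object $\mathbb K$ of the enlarged category, the closed-open image of $\hat p^E_{\mathcal A}([\xi_E])$ acting on $HF(\mathbb K,\mathbb K)$ mod $T^E$ factors through the composite
\[
HF(\mathbb K,\mathcal A)\otimes HH^{\le D}_*(\mathcal A)\otimes HF(\mathcal A,\mathbb K)\longrightarrow HF(\mathbb K,\mathbb K).
\]
This is the truncated form of the statement that the identity of $\mathbb K$ is realized through the Yoneda module. We apply this with $\mathbb K=(L_\gamma,b_\gamma)$. The left-hand side equals $\operatorname{CO}^E_{L_\gamma}(T^\rho e_X)=T^\rho e_{L_\gamma}$, because $\operatorname{CO}$ is unital modulo $T^E$. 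The right-hand side vanishes, because the chain-level groups $CF(L_\gamma,L_i)$ are zero. Thus $T^\rho e_{L_\gamma}=0$ in $HF((L_\gamma,b_\gamma),(L_\gamma,b_\gamma);\Lambda_0/T^E\Lambda_0)$, which contradicts hypothesis (2). Since $E>\rho$, the class $T^\rho e_{L_\gamma}$ is a meaningful candidate for being nonzero, so hypothesis (2) is not vacuous.

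The main obstacle is the middle step. We must check that the truncated factorization of \cite{abouzaid2026quantum} applies to an object that is only weakly unobstructed modulo $T^E$, possibly with a potential $\lambda$ different from those of the $L_i$. We must also check that the word-length bound $D$ and the energy loss $\rho$ are independent of the added object, which is essentially the content of Theorem 32.1. The first thing to try is a direct argument. One writes the homotopy between $\operatorname{CO}\circ\hat p$ and the Cardy-type composition via the annulus/disk moduli spaces with one boundary on $L_\gamma$ and the other boundary labels in $\mathcal A$. Energy filtration then shows that the homotopy preserves the $T^E$-truncation up to the fixed shift $\rho$. Finally, every term with an $L_\gamma$-to-$L_i$ boundary switch vanishes because the two Lagrangians are disjoint. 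This argument does not require $\lambda$ to match.
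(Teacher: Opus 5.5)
Your proposal is correct and is essentially the paper's proof. Under the no-chord assumption, $L_\gamma$ is embedded and disjoint from every $L_i$, so the two-sided Yoneda module ${}_{L_\gamma}\mathcal F_{\mathcal A}\hat\otimes_{\mathcal A}{}_{\mathcal A}\mathcal F_{L_\gamma}$ vanishes; the truncated Cardy relation (Eq.~(32.4) of \cite{abouzaid2026quantum}) together with unitality of $\hat q_{L_\gamma}$ then forces $T^\rho e_{L_\gamma}=0$, contradicting (2).

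One correction is needed in the collar step. Shrinking $|S|$ alone does not make $L_i$ cylindrical. For example, $\{(s,\phi^{cs}_{R_\alpha}(x)) : x\in\ell_i\}$ is Lagrangian and meets $V$ in $\ell_i$, but for $c<0$ an intersection with $L_\gamma$ only gives $\phi^{t(\theta)-cs(\theta)}_{R_\alpha}(x)\in\ell_i$, with possibly non-positive time. So, as the paper does, first change the Liouville primitive near $V$ by some $dH$ with $H|_V=0$, chosen so that the Liouville field is tangent to each $L_i$ near $\ell_i$.
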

	 
	 \begin{proof}
	 	Let $K$ be any closed embedded relatively-spin Lagrangian submanifold of $X$ and assume that $K$ is weakly unobstructed modulo $T^E$. Observe that, there exists an $i=1, \dots, k$ such that $K$ intersects $L_i$ transversally, possibly after a small Hamiltonian perturbation. Indeed, otherwise 
	 	\[
	 	CF^*(L_i, K; \Lambda)=CF^*(K, L_i; \Lambda)=\{0\},
	 	\] 
	 	for any $i=1,\dots, k$. Using $\S23$ Equations $(23.2)$-$(23.4)$ of \cite{abouzaid2026quantum}, it follows that the two-sided Yoneda module
	 	\[
	 	{}_{K}\mathcal{F}_\mathcal A\hat{\otimes}_\mathcal A{}_\mathcal A\mathcal F_{K}
	 	\]
	 	is trivial. Using Equation $(32.4)$ of \cite{abouzaid2026quantum}, it follows that, $\hat{q}_{K}\circ\hat{p}^E_\mathcal A\equiv 0$, where $\hat{q}_K$ is the associated closed-open to $K$. Using unitality of $\hat{q}_K$ and as $\hat{p}^E_\mathcal A([\xi_E])=T^\rho e_X$, we get a contradiction. Therefore, for some $i=1,\dots, k$, $K$ intersects $L_i$ transversally, possible after a small Hamiltonian perturbation.
	 	
	 	Now let $S<0$ and $\iota:(S, 0]_s\times V\hookrightarrow X$ be a symplectic embedding satisfying $\iota^*\omega=d(e^s\alpha)$ and identifying $\{0\}\times V$ with $V$. Note that, possibly after an exact perturbation of $d(e^s\alpha)$ supported away from $\{0\}\times V$ and taking a bigger $S<0$, we may assume that 
	 	\[
	 	L_i\cap(\iota((S, 0]\times V))\equiv \iota((S, 0]\times\ell_i),
	 	\]
	 	for every $i=1, \dots, k$. Let $T>0$ and $\gamma:S^1\rightarrow (S, 0)\times (0, T)$ be as in the statement. Following the above discussion, we denote by $L_\gamma$ the associated Lagrangian submanifold of $X$, given by Mohnke's construction on $\ell$ using $\gamma$ and $\iota$. If $\ell$ admits no Reeb chords for $\alpha$, then applying the above argument for $K\equiv L_\gamma$, we get a contradiction, which completes the proof.
	 \end{proof}

\begin{remark}
The argument used in the proof of Proposition~3 suggests a further
application of the generation-criteria of \cite{abouzaid2026quantum}, which we leave for future work. Choose a contact-type collar of
$V$ on which the almost-complex structure is cylindrical and which is
disjoint from the support of the perturbation data used in the construction of the Cardy diagram as in \cite{abouzaid2026quantum}. One may then stretch the neck along $V$ in the moduli spaces of pseudo-holomorphic annuli entering that diagram, having an interior marked point constraint on $V$. Any non-trivial symplectization level should contain either an interior puncture asymptotic to a closed Reeb orbit or a boundary puncture asymptotic to a Reeb chord of the Legendrian link $\bigcup_i\ell_i$. Therefore, either $V$ carries a closed Reeb orbit or $\bigcup_i\ell_i$ carries a Reeb chord. A complete proof would require an SFT compactness and gluing argument for these annulus moduli spaces, compatible with the perturbations used to establish the Cardy relation as in \cite{abouzaid2026quantum}.
\end{remark}

\subsection{Quantitative Crietria}
	 Under the assumption that $(V, \alpha)\subset (X, \omega)$ is a hypersurface of contact-type, it follows that $\omega|_V=d\alpha$. In particular, and using the deRham model of relative cohomology, it follows that the pair $(\omega, \alpha)$ defines a non-zero class $[\omega, \alpha]\in H^2(X, V;\mathbb R)$. Let $P:\pi_2(X,V)\rightarrow\mathbb R$ be the $(\omega, \alpha)$-associated group morphism, given by
	 \[
	 P([u]):=\int_{\mathbb D}u^*\omega-\int_{\partial\mathbb D}u^*\alpha,
	 \]
	 where $u:(\mathbb D, \partial\mathbb D)\rightarrow(X,V)$ is a continuous map. Using the notation as in $\textit{Mohnke's}$ construction discusion above, we set $C_\gamma:=\int_\gamma e^sdt$.
     
	 \begin{defn} \label{QuanCriteria}
	 	We define $\delta_\gamma:=\inf((C_\gamma\mathbb Z+P(\pi_2(X,V))\cap\mathbb{R}_{>0})\in[0, \infty]$, with the understanding that if $(C_\gamma\mathbb Z+P(\pi_2(X,V))\cap\mathbb{R}_{>0}=\emptyset$ then we set $\delta_\gamma\equiv\infty$.
	 \end{defn}

	 \begin{proposition}\label{quantitativeresult}
	 	Let $\ell\subset V$ be a closed Legendrian and suppose that, there exists a smooth loop $\gamma$ such that $\delta_\gamma>\rho$. Then, either $\ell$ admits a self-Reeb chord or a Reeb chord joing $\ell$ to the Legendrian link $\bigcup_i\ell_i$, for the contact form $\alpha$.
	 \end{proposition}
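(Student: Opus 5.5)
The plan is to reduce Proposition \ref{quantitativeresult} to Proposition \ref{MainResClosedCase}. Suppose $\ell$ has no Reeb chords for $\alpha$, neither to itself nor to $\bigcup_i\ell_i$. Then Mohnke's construction applied to $\gamma$ and $\iota$ produces an embedded relatively-spin Lagrangian $L_\gamma\subset X$, diffeomorphic to $\ell\times S^1$. It then suffices to check the two hypotheses of Proposition \ref{MainResClosedCase} for a suitable $E$ with $\rho<E<\delta_\gamma$. Such an $E$ exists because $\delta_\gamma>\rho$, and we may take $E$ to be any finite number in that range, even when $\delta_\gamma=\infty$.

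\textbf{Energy computation.} The central step is to show that every nonconstant pseudo-holomorphic disk $u$ with boundary on $L_\gamma$ has energy in $C_\gamma\mathbb Z+P(\pi_2(X,V))$. I would homotope $\partial u$ inside $L_\gamma$ and then, through the collar $\iota((S,0]\times V)$, push it to $V$. Concretely, the Reeb flow $\phi^{t}_{R_\alpha}$ and the radial coordinate $s$ give a cylinder from $F_\gamma(\ell\times S^1)$ to $\{0\}\times V$. With this cylinder, $\omega(u)$ splits as $\int_{\partial u}e^s\alpha$ plus $P([\tilde u])$ for the resulting disk $\tilde u$ with boundary on $V$. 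On $L_\gamma$ we have $F_\gamma^*(e^s\alpha)=e^{s(\theta)}t'(\theta)d\theta$, which is closed and has period $C_\gamma$ along the $S^1$ factor and period $0$ along loops in $\ell$. Hence $\int_{\partial u}e^s\alpha\in C_\gamma\mathbb Z$. The term coming from the connecting cylinder is accounted for by $\alpha$ being the primitive of $\omega$ on the collar, and this is where the subtraction of $\int u^*\alpha$ in $P$ matters. Positive energy therefore forces $\omega(u)\geq\delta_\gamma>E$.

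\textbf{Verifying the hypotheses.} Given the energy bound, all holomorphic disks contributing to the filtered $A_\infty$-structure of $L_\gamma$ modulo $T^E$ are constant. So modulo $T^E$ the operations $m_k$ on $\Omega^*(L_\gamma)\hat\otimes\Lambda_0/T^E$ reduce to the classical de Rham operations: $m_0=0$, $m_1=d$, $m_2=\wedge$, and the higher operations vanish. Taking $b_\gamma=0$ then gives weak unobstructedness modulo $T^E$ with $\lambda=0$, which is hypothesis (1). For hypothesis (2), $HF^*(L_\gamma,L_\gamma;\Lambda_0/T^E\Lambda_0)\cong H^*(L_\gamma;\mathbb F)\otimes\Lambda_0/T^E\Lambda_0$. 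In this group $T^\rho e_{L_\gamma}\neq0$ because $\rho<E$ and $e_{L_\gamma}$ is a free generator coming from $H^0$. Proposition \ref{MainResClosedCase} then gives the required contradiction, or directly the desired chord.

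\textbf{Main obstacle.} I expect the hard part to be the energy computation: identifying the symplectic area class of $L_\gamma$ with $C_\gamma\mathbb Z+P(\pi_2(X,V))$. This needs a careful decomposition of a class in $\pi_2(X,L_\gamma)$ into a class in $\pi_2(X,V)$ plus a loop contribution in $L_\gamma$. Care is needed with boundary loops that wind along both the $\ell$ and $S^1$ directions, and with loops in $\ell$ that are not nullhomotopic. I would handle these by noting that $e^s\alpha$ restricts to $0$ on $\ell$ and that the Reeb-flow cylinder has zero $\omega$-area relative to $\alpha$. A secondary, more routine technical point is to check that the Kuranishi/de Rham model of \cite{abouzaid2026quantum} indeed reduces to the classical one below energy $E$; this should follow from Gromov compactness and the positive lower bound $\delta_\gamma$.
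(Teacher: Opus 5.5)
Your proposal is correct and follows the paper's proof essentially step for step. The shared steps are the Stokes identity $\omega(\beta)-\kappa(\beta)C_\gamma=P(\bar\beta)$, obtained from the radial collar annulus; the resulting bound $\omega(u)\geq\delta_\gamma$ for nonconstant disks on $L_\gamma$; and the reduction of the $A_\infty$-structure modulo $T^E$ to the de Rham one for $\rho<E<\delta_\gamma$, which yields hypotheses (1) and (2) of Proposition \ref{MainResClosedCase}. The differences are cosmetic: you read off $HF^*$ directly from $m_1\equiv d_{dR}$ mod $T^E$ where the paper uses a collapsing spectral sequence, and the Reeb-flow part of your cylinder is unnecessary because the radial annulus alone handles every boundary loop uniformly.
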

	 
	 \begin{proof}
	 	Following our notations and the constructions as above, let $L_\gamma$ be the associated Lagrangian given by $\ell$ and $\gamma$. It suffice to show that $L_\gamma$ satisfies the conditions of Proposition \ref{MainResClosedCase}. To this end, assume that $\ell$ admits no Reeb chords for $\alpha$ and let $\beta\in\pi_2(X, L_\gamma)$. Denote by $\kappa(\beta)\in\mathbb Z$ the winding number of $\partial\beta$ around the $S^1$-factor under the identification $L_\gamma\cong S^1\times\ell$. Denote by $\bar{\beta}\in\pi_2(X, V)$ given by attaching the $\textit{radial collar annulus}$ of $\partial\beta$ to its projection onto the $V$-factor. By Stokes' Theorem, it follows that
	 	\[
	 	\omega(\beta)-\kappa(\beta)C_\gamma=P(\bar{\beta}).
	 	\]
	 	Therefore, $\omega(\pi_2(X, L_\gamma))\subseteq C_\gamma\mathbb Z+P(\pi_2(X, V))$. In particular, for a $u:(\mathbb D, \partial\mathbb D)\rightarrow (X, L_\gamma)$ non-constant pseudo-holomorphic curve, its energy satisfies $E(u)\geq\delta_\gamma$.
	 	
	 	By assumption, $\delta_\gamma>\rho$ and hence for $\rho<E<\delta_\gamma$ and after writing the $A_\infty$-operations on $\Omega^*(L_\gamma;\mathbb F)\hat{\otimes}_{\mathbb F}\Lambda_0$ as $m_k=m_{k, 0}+\sum_{\omega(\beta)>0}m_{k,\beta}T^{\omega(\beta)}$, it follows that, (c.f. Equation $6.13$ of \cite{abouzaid2026quantum})
	 	\[
	 	m_{1, 0}=d_{dR}, 
	 	\]
	 	\[
	 	m_{1, \beta}(\eta)=d_{dR}(\eta)\text{     mod } T^E\Lambda_0.
	 	\]
	 	In particular, $L_\gamma$ is unobstructed modulo $T^E$. Now consider $\Omega^*(L_\gamma;\mathbb F)\hat{\otimes}_{\mathbb F}\Lambda_0/T^E\Lambda_0$ with its $T$-adic topology given by the $T$-filtration which we denote by $\mathcal{F}$. By Theorem $D$ and Theorem $6.3.28$ of \cite{Fukaya2009} or Theorem $32.1$ of \cite{abouzaid2026quantum}, the $\mathcal{F}$-filtration on
	 	\[
	 	(\Omega^*(L_\gamma;\mathbb F)\hat{\otimes}_{\mathbb F}\Lambda_0/T^E\Lambda_0, m_1 \text{   mod }T^E)
	 	\]
	 	induces a convergent spectral sequence whose $E_2$-page is of the form
	 	\[
	 	E_2^{*,*}\cong H^*(L_\gamma;\mathbb F)\hat{\otimes}_\mathbb F gr_\mathcal{F}\Lambda_0/T^E\Lambda_0.
	 	\] 
	 	As $m_1(\eta)\equiv d_{dR}(\eta)\text{   mod }T^E\Lambda_0$ for any $\eta\in\Omega^*(L_\gamma;\mathbb F)\hat{\otimes}_{\mathbb F}\Lambda_0/T^E\Lambda_0$, it follows that the differentials of the spectral sequence $d_r$ vanishes for $r\geq2$. In particular, the spectral sequence collapse at its $E_2$-page and as the $\mathcal F$-filtration on $\Lambda_0/T^E\Lambda_0$ is bounded, exhaustive and complete, it follows that,
	 	\[
	 	HF^*(L_\gamma, L_\gamma; \Lambda_0/T^E\Lambda_0)\cong H_{dR}(L_\gamma)\hat{\otimes}_\mathbb R\Lambda_0/T^E\Lambda_0.
	 	\]
	 	Note that, the class $T^\rho e_{L_\gamma}$ is a cocycle as $e_{L_\gamma}\equiv 1_{L_\gamma}\in\Omega^0(L_\gamma)$ and $\rho<E$. Thus, $T^\rho e_{L_\gamma}\neq 0\in H^0_{dR}(L_\gamma)\hat{\otimes}_\mathbb R gr_\mathcal F \Lambda_0/T^E\Lambda_0$. As the spectral sequence collapses at its $E_2$-page, it follows that, $T^\rho e_{L_\gamma}\neq 0\in HF^*(L_\gamma, L_\gamma; \Lambda_0/T^E\Lambda_0)$. Now the result follows using Proposition \ref{MainResClosedCase}.
	 \end{proof}

     \begin{remark}
         Of course, one can apply the above result to the case when $\ell\equiv\ell_i$ for some $i=1,\dots, k$ and deduce that, under the setting of the above Proposition and for any $i=1,\dots, k$, there exists a $j=1,\dots k$ and at least one Reeb chord for the contact form $\alpha$, joining $\ell_i$ to $\ell_j$. 
     \end{remark}
\begin{example}
   Let $X$ be $\mathbb{CP}^2$ with its Fubini-Study symplectic form $\omega$, normalized so that $\int_{\mathbb {CP}^1}\omega\equiv 1$. Following the notations as in $\S 28$ of \cite{abouzaid2026quantum}, let $L\equiv L(\frac{1}{3}, \frac{1}{3})$ be the Clifford torus, with its weak bounding cochains $b_0, b_1, b_2$ and set $\mathcal A:=\langle (L, b_0), (L, b_1), (L, b_2)\rangle$. Note that, using $\S 32.2$ of \cite{abouzaid2026quantum}, it follows that $T^{\frac{2}{3}}e_{\mathbb {CP}^2}\in\operatorname{Im} \hat{p}_{\mathcal A}$. Let $V\subset X$ be the hypersurface of contact-type given by $V:=\mu^{-1}(\{(u_1, u_2):u_1+u_2=\frac{2}{3}\})$, where the is the standard moment map $\mu:\mathbb {CP}^2\rightarrow\{(u_1, u_2)\in\mathbb R^2:u_1, u_2\geq0, u_1+u_2\leq1\}$ given by $\mu([z_0:z_1:z_2])=\left(\frac{|z_1|^2}{\sum_i|z_i|^2}, \frac{|z_2|^2}{\sum_i|z_i|^2}\right)$. Notice that, $V\cong S^3$ and $L\subset V$. On the other hand, after a small Hamiltonian perturbation of $L$ denoted by $L^\epsilon$, we have that $L^\epsilon$ intersects $V$ transversally in two Legendrians, $\ell_1$ and $\ell_2$. Now let $\ell$ be any closed relatively-spin Legendrian of $V$ and following the same notation as above, let $\gamma$ be a loop so that $C_\gamma\equiv1$. As $V\cong S^3$, it follows that $\delta_\gamma=1>\frac{2}{3}\equiv\rho$. Therefore by Proposition \ref{quantitativeresult}, it follows that, either $\ell$ admits a self-Reeb chord or a Reeb chord joining it to $\ell_1\sqcup \ell_2$. Moreover, any collar neighborhood of $V$ is non-displaceable in $\mathbb {CP}^2$ as otherwise, the Clifford torus would be Hamiltonianly displaceable in $\mathbb{CP}^2$. 
\end{example}

  To finish this section, we relate our quantitative constant $\delta_\gamma$ as in Definition \ref{QuanCriteria} to other intrinsic quantitative constants as in \cite{cieliebak2018punctured} and \cite{Zhou2020OnTM}.
  \begin{defn}[\cite{cieliebak2018punctured}]
    For a Lagrangian $L$ of $X$, we define
    \[
    A_{min}(L):=\inf\{\omega(\beta)>0:\beta\in\pi_2(X,L)\}.
    \]
  \end{defn}
 Using the same notation, we observe that, if $\omega(\pi_2(X, L_\gamma))\subseteq C_\gamma\mathbb Z+P(\pi_2(X,V))$ 
 then, $\delta_\gamma\leq A_{min}(L_\gamma)$. In particular, we have $\delta_\gamma\leq A_{min}(L_\gamma)\leq C_\gamma$. For instance, if $(X, \omega)$ is symplectically aspherical and $\pi_1(V)\hookrightarrow\pi_1(X)$ is an injective group morphism, then $\delta_\gamma\equiv C_\gamma\equiv A_{min}(L_\gamma)$.

\begin{corollary} \label{MohnkeResult}
    Under the same setting as above and assuming that $(X, \omega)$ is symplectically aspherical and that $\pi_1(V)\hookrightarrow\pi_1(X)$ is an injective group morphism; any closed Legendrian $\ell$ of $V$ admits either a self-Reeb chord for the contact form $\alpha$ or a Reeb chord for $\alpha$, joining $\ell$ to the Legendrian link $\bigcup_i\ell_i$.
\end{corollary}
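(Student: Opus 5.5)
The plan is to reduce the corollary to Proposition \ref{quantitativeresult}. It suffices to produce, for the given $\ell$, a loop $\gamma$ with $\delta_\gamma>\rho$. Under the two hypotheses I will show that $P(\pi_2(X,V))=0$, so that $\delta_\gamma=C_\gamma$. I will then choose $\gamma$ with $C_\gamma$ as large as the collar allows and argue that this exceeds $\rho$.

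\textbf{Step 1: $P$ vanishes on $\pi_2(X,V)$.} Take a class $[u]\in\pi_2(X,V)$. Its boundary loop $u|_{\partial\mathbb D}$ is contractible in $X$, and by injectivity of $\pi_1(V)\to\pi_1(X)$ it is also contractible in $V$. So it bounds a disc $v:\mathbb D\to V$. Gluing $u$ to $\bar v$ gives a sphere $w:S^2\to X$, and symplectic asphericity gives $\omega(w)=0$, that is
\[
\int_{\mathbb D}u^*\omega=\int_{\mathbb D}v^*\omega=\int_{\mathbb D}v^*d\alpha=\int_{\partial\mathbb D}u^*\alpha ,
\]
where we use $\omega|_V=d\alpha$ and Stokes. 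Hence $P([u])=0$. Consequently $C_\gamma\mathbb Z+P(\pi_2(X,V))=C_\gamma\mathbb Z$, and $\delta_\gamma=C_\gamma=\int_\gamma e^s\,dt>0$ for a positively oriented embedded loop. This matches the remark preceding the corollary.

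\textbf{Step 2: make $C_\gamma>\rho$.} The loop $\gamma$ lives in $(S,0)\times(0,T)$ and $T>0$ is free, so I take $\gamma$ to be the boundary of a rectangle $[s_0,s_1]\times[0,T']$ with $S<s_0<s_1<0$, smoothed at the corners. Its enclosed area is $\int e^s\,ds\,dt\approx (e^{s_1}-e^{s_0})T'$, and letting $T'\to\infty$ makes $C_\gamma$ arbitrarily large, in particular $C_\gamma>\rho$. The Mohnke map $F_\gamma$ only needs $\ell$ to have no Reeb chords of any length. That is precisely the assumption under which the argument of Proposition \ref{quantitativeresult} runs, since we argue by contradiction. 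So $F_\gamma$ remains an embedding for every $T$, and $L_\gamma$ is an embedded closed Lagrangian inside the collar $\iota((S,0]\times V)$. Applying Proposition \ref{quantitativeresult} with this $\gamma$ then yields the dichotomy.

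\textbf{Main obstacle: the role of asphericity in the proof of Proposition \ref{quantitativeresult}.} The energy identity there, $\omega(\beta)-\kappa(\beta)C_\gamma=P(\bar\beta)$, must hold for every $\beta\in\pi_2(X,L_\gamma)$. Constructing $\bar\beta$ by projecting the boundary of $\beta$ to $V$ and attaching the radial collar annulus needs the boundary loop to lie in the collar, which it does since $\partial\beta\subset L_\gamma$. With this, Step 1 gives $\omega(\pi_2(X,L_\gamma))\subseteq C_\gamma\mathbb Z$, and every nonconstant disc on $L_\gamma$ has energy at least $C_\gamma>\rho$, which is what the spectral-sequence argument there requires. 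I also need to check that sphere bubbles cause no obstruction, since $\omega$ vanishes on $\pi_2(X)$. That check, together with the fact that unboundedness of $T$ does not interfere with the choice of the collar $\iota$ (the collar fixes $S$ but not $T$), is where I expect most of the care to go.
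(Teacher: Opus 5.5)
Your proposal is correct and follows essentially the same route as the paper. The remark preceding the corollary observes that symplectic asphericity together with $\pi_1$-injectivity give $P(\pi_2(X,V))=0$, hence $\delta_\gamma=C_\gamma$; since $T>0$ is free, one chooses $\gamma$ with $C_\gamma>\rho$ and applies Proposition \ref{quantitativeresult}, exactly as in your Steps 1--2. One minor fix: take the rectangle to be $[s_0,s_1]\times[t_0,t_1]$ with $0<t_0<t_1<T$, so that $\gamma$ lies in the open strip $(S,0)\times(0,T)$ rather than touching $t=0$.
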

The above Corollary is closely related to the well-known result of Mohnke \cite{mohnke2001holomorphic}, yet we do not impose any Hamiltonian-displaceability assumption. We illustrate such difference in the following result. 

\begin{theorem}\label{cosphereResult}
    For $n\geq3$, let $V=S^*\mathbb T^n$ be the unit cosphere-bundle of the flat torus $\mathbb T^n$ together with its canonical contact form $\alpha$ induced from $T^*\mathbb T^n$. There exists a closed embedded Legendrian link $\ell'$ such that, every closed spin Legendrian $\ell$ of $V$ either admits a non-constant self-Reeb chord for $\alpha$ or a Reeb chord for $\alpha$ joining $\ell$ to $\ell'$.
\end{theorem}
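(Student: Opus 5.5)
Throughout, set $X=\mathbb T^{2n}\cong T^*\mathbb T^n/\mathbb Z^n$. Concretely, take $X=\mathbb T^n_q\times\mathbb T^n_p$ with $\omega=\sum dp_i\wedge dq_i$, and embed the unit codisc bundle region $\{|p|\le r\}\subset T^*\mathbb T^n$ into $X$ for some $r<1/2$. The plan is to apply Corollary~\ref{MohnkeResult} with $V=\{|p|=r\}\cong S^*\mathbb T^n$ (rescaled). The contact form induced by the Liouville form $p\,dq$ is $\alpha$ up to the constant factor $r$, and constant rescaling does not affect the existence of Reeb chords. Three ingredients are needed: a split-generating collection $\mathcal A$ satisfying the generation-criteria, the asphericity and $\pi_1$-injectivity hypotheses, and the identification of $\ell'=\bigcup_i\ell_i$ as a closed embedded Legendrian link.

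\textbf{Generation.} I would take $\mathcal A$ to consist of linear Lagrangian tori $L_i$ in $\mathbb T^{2n}$, equipped with flat $\mathbb C^*$ local systems ($b_{0}$ closed $1$-forms) and $b_+=0$. These are unobstructed because $\pi_2(X,L_i)=0$ and the tori are flat. Since $X$ is aspherical, $m_0=0$. For the generation-criteria I would invoke the known homological mirror symmetry for tori (Abouzaid--Smith, or the open-closed argument of \cite{abouzaid2026quantum}): a finite collection of linear tori $\{q=c\}\times\mathbb T^n_p$, $\mathbb T^n_q\times\{p=c\}$, and suitably chosen graph tori of linear maps, with finitely many holonomies, has $e_X$ in the image of $\hat p_{\mathcal A}$. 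For instance, the fibre torus $\mathbb T^n_p$ together with the zero section, with local systems, should hit the unit after pairing via the Cardy relation, since the open-closed image of a product of a fibre and a section class generates $QH^*=H^*(\mathbb T^{2n})\otimes\Lambda$ in top degree. I expect this step to be the main obstacle. It requires producing a specific finite collection whose open-closed image contains $e_X$, and not just the point class. My first attempt would be a K\"unneth argument: split-generation of $\mathbb T^2$ by a meridian and a longitude with local systems (a standard fact) gives a product collection for $(\mathbb T^2)^n$, and I would then use the product compatibility of $\hat p$.

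\textbf{Intersection with $V$.} After perturbation, each $L_i$ must intersect $V$ transversally in a Legendrian. This holds for the fibre-type tori $\{q=c\}\times\mathbb T^n_p$: their intersection with $\{|p|=r\}$ is $\{c\}\times S^{n-1}_r$, which is a Legendrian cosphere fibre. For the zero-section type tori $\{p=c\}$, choose $|c|\ne r$ (or $|c|<r$), so that they either miss $V$ or meet it in a Legendrian. Zero-section tori entirely inside the disc bundle miss $V$, which is harmless, since they simply contribute nothing to $\ell'$. Tori with $|c|>r$ also miss $V$. Therefore $\ell'$ is a finite union of cosphere fibres together with possibly the Legendrian intersections of graph tori, all disjoint after choosing distinct constants. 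The relative-spin hypotheses hold because all the objects involved are tori or spheres.

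\textbf{Topological hypotheses.} $X$ is aspherical with $\pi_2(X)=0$, so it is symplectically aspherical. For $n\ge3$, $V\cong\mathbb T^n\times S^{n-1}$ has $\pi_1(V)=\mathbb Z^n$ because $S^{n-1}$ is simply connected. The inclusion $V\hookrightarrow X$ induces the inclusion $\mathbb Z^n\to\mathbb Z^{2n}$ onto the $q$-factor, which is injective. This is where $n\ge3$ enters: for $n=2$ the fibre $S^1$ contributes to $\pi_1(V)$ and that class dies in $X$. Corollary~\ref{MohnkeResult} then applies to any closed spin (hence relatively spin) Legendrian $\ell$, giving either a self-chord or a chord to $\ell'$ for $r\alpha$, and hence for $\alpha$.
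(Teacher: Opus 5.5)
Your overall architecture matches the paper's: you apply Corollary~\ref{MohnkeResult} in $X=\mathbb T^n_q\times\mathbb T^n_p$, with $V$ the boundary of a small codisc bundle of $\{p=0\}$. Your check of symplectic asphericity and $\pi_1$-injectivity, including why $n\geq 3$ is needed, is correct and more explicit than the paper's.

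\textbf{Generation step.} The gap is here, and this step is the real content of the theorem; you leave it open. Your concrete candidate is wrong. For $n\geq 2$, the zero section $\{p=b\}$ and the fibre $\{q=a\}$, with any finite set of local systems, do not satisfy the generation-criteria. To see this, take e.g.\ $K=\{p_1=c_1,\ q_2=c_2,\ p_3=c_3,\dots,p_n=c_n\}$ with $c_1\neq b_1$ and $c_2\neq a_2$; it is disjoint from both tori. The first paragraph of the proof of Proposition~\ref{MainResClosedCase} then gives $\hat q_K\circ\hat p_{\mathcal A}=0$, which is incompatible with $e_X\in\operatorname{Im}\hat p_{\mathcal A}$ since $HF(K,K)\neq 0$. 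Hitting the point class is irrelevant.

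Your fallback K\"unneth idea points the right way, but two things are missing:
\begin{enumerate}
\item Split-generation of $\mathcal F(\mathbb T^2)$ by $\{l,m\}$ does not by itself pass to products.
\item The ``product compatibility of $\hat p$'' you would need is neither proved nor cited. This is the statement that the open--closed map of the product category intertwines the K\"unneth isomorphisms on $HH_*$ and $QH^*$.
\end{enumerate}
The paper avoids this as follows. $\mathcal A_1=\langle(l,0),(m,0)\rangle$ is cohomologically smooth (Abouzaid--Smith), and smoothness passes to $\mathcal A_1^{\otimes n}$. Theorem~26.3 of \cite{abouzaid2026quantum} identifies this with the geometric product category on the $2^n$ tori $L_1\times\cdots\times L_n$, $L_i\in\{l,m\}$. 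Since $c_1(X)=0$, the automatic-generation result (Remark~23.6 and Theorem~1.8 there, after Ganatra) turns smoothness into the generation-criteria.

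\textbf{Legendrian step.} Your analysis does not cover the collection you would actually end up with. Besides the pure fibre and zero-section tori, the product collection contains mixed tori $\{q_I=c_I,\ p_J=c_J\}$, with $I\sqcup J=\{1,\dots,n\}$ and both nonempty.
\begin{enumerate}
\item These meet $\{|p|=r\}$ transversally, but $p\,dq$ restricts to $c_J\cdot dq_J$ on the intersection. So the intersection is Legendrian only if $c_J=0$.
\item When $c_J=0$, the torus is the conormal of the subtorus $\{q_I=c_I\}$ and hence conical near $V$. Its intersection with $V$ is the unit conormal $\cong\mathbb T^{|J|}\times S^{|I|-1}$.
\item Your assertion that graph tori $\{p=Aq\}$ meet $V$ in Legendrians is false in general, since $p\,dq$ restricts there to $\langle Aq,dq\rangle$.
\end{enumerate}
So you must set $c_J=0$ and then get pairwise transversality without destroying conicality near $V$. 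One way: give the tori pairwise distinct $q$-constants in each coordinate where they have a fibre factor; translations in $q$ preserve conicality. Afterwards, any two tori that still meet do so only inside $\{p=0\}$. The remaining non-transverse intersections can be removed by a perturbation supported near $\{p=0\}$, away from the collar of $V$. The paper compresses this into ``after a small Hamiltonian perturbation''.
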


\begin{proof}
    Let $X=\mathbb T^{2n}\cong\mathbb T^n\times \mathbb T^n$ together with its standard symplectic form and $V=S^*\mathbb T^n$ be the unit cosphere bundle, thought of as the boundary of a small enough Weisntein neighborhood of $\mathbb T^n\times\{1\}\subset X$. Denote by $\mathcal A_1:=\langle(l, 0), (m,0)\rangle$ the $A_\infty$-category generated by the the longitude and meridian circles of the torus $\mathbb T=S^1\times S^1$ with zero bounding cochains. Using $\S 6.3$ of \cite{abouzaid2010homological}, namely Theorem $6.4$ and Remark $6.5$ of \cite{abouzaid2010homological}, it follows that, $\mathcal A_1$ is a cohomologically smooth $A_\infty$-category. In particular, and following the notation as in $\S 26$ of \cite{abouzaid2026quantum}, it follows that the $n$-fold algebraic tensor-product $A_\infty$-category $\mathcal A^{\otimes n}\equiv\mathcal A_1\otimes\dots\otimes\mathcal A_1$ is cohomologically smooth. On the other hand, as the geometric-product $A_\infty$-category, $\mathcal A^n\equiv\langle (L_1\times \dots\times L_n,0)\rangle$ such that $L_i\in\{l, m\}$ for $i=1,\dots,n$, has the same objects as $\mathcal A^{\otimes n}$, it follows by Theorem $26.3$, Equation $(26.20)$ of \cite{abouzaid2026quantum} (c.f. Theorem $16.9$ of \cite{fukaya2025unobstructed}) that $\mathcal A^n$ is cohomologically smooth. Note that, $c_1(X)=0$ and hence, by Remark $23.6$ of \cite{abouzaid2026quantum} (c.f. \cite{ganatra2016automatically}) and Theorem $1.8$ of \cite{abouzaid2026quantum}, it follows that $\mathcal A^n$ satisfies the generation-criteria of \cite{abouzaid2026quantum}. After a small Hamiltonian perturbation of $L_1\times\dots\times L_n$ and abuse of notation, we may assume that $\ell'
:=
\bigsqcup_{(L_1,\ldots,L_n)\in\{l,m\}^n}
\left((L_1\times\cdots\times L_n)\cap V\right)$ is a Legendrian link of $V$. Now the result follows by Corollary \ref{MohnkeResult}.
\end{proof}

\bibliography{ref}
\bibliographystyle{alpha}
\bigskip
\noindent Emails:
\texttt{mohamad@amss.ac.cn},
\texttt{mohammadbrabah@gmail.com}
\end{document}